\documentclass{amsart}
\usepackage{amsthm,xcolor}
\usepackage{latexsym,amssymb,amsfonts,amsmath ,graphicx, url, paralist}
\usepackage[vcentermath,enableskew]{youngtab}
\usepackage{color}
\usepackage[noadjust]{cite} 
\usepackage{hyperref, mathrsfs, tikz}
\usetikzlibrary{arrows,decorations.pathreplacing}
\usepackage[margin=1in]{geometry}
\usepackage{bm}
\usepackage{subfig}
\usepackage{mathtools}

\def\aff{{\rm aff}}

\newtheorem{theorem}{Theorem}[section]
\newtheorem{proposition}[theorem]{Proposition}
\newtheorem{lemma}[theorem]{Lemma}

\newtheorem{corollary}[theorem]{Corollary}
\theoremstyle{definition}
\newtheorem{definition}[theorem]{Definition}
\newtheorem{example}[theorem]{Example}
\theoremstyle{remark}
\newtheorem{remark}[theorem]{Remark}

\newcommand{\F}{\mathcal{F}}
\newcommand{\p}{\mathcal{P}}
\newcommand{\s}{\mathfrak{S}}

\DeclareMathOperator{\GT}{GT}

\def\S{\mathfrak{S}}

\newtheoremstyle{named}{}{}{\itshape}{}{\bfseries}{.}{.5em}{#1 \thmnote{#3}}
\theoremstyle{named}
\newtheorem*{namedtheorem}{Theorem}
\newtheorem*{namedlemma}{Lemma}

\newcommand\multiset[2]%
{\mathchoice{\left(\kern-0.5em{\binom{#1}{#2}}\kern-0.5em\right)}
            {\bigl(\kern-0.3em{\binom{#1}{#2}}\kern-0.3em\bigr)}
            {\bigl(\kern-0.3em{\binom{#1}{#2}}\kern-0.3em\bigr)}
            {\bigl(\kern-0.3em{\binom{#1}{#2}}\kern-0.3em\bigr)}}

\title[]{Schubert polynomials as projections of  Minkowski sums of Gelfand-Tsetlin polytopes}
\author{Ricky Ini Liu}
\address{Ricky Ini Liu, Department of Mathematics, North Carolina State University, Raleigh, NC 27695. \newline{riliu@ncsu.edu}}

\author{Karola M\'esz\'aros}
\address{Karola M\'esz\'aros, Department of Mathematics, Cornell University, Ithaca, NY 14853 and School of Mathematics, Institute for Advanced Study, Princeton, NJ 08540.  \newline{karola@math.cornell.edu}
}

\author{Avery St. Dizier}
\address{Avery St. Dizier, Department of Mathematics, Cornell University, Ithaca NY 14853.  \newline{ajs624@cornell.edu}
}

\thanks{Liu  is partially supported by a National Science Foundation Grant (DMS 1758187). 
 M\'esz\'aros is partially supported by a National Science Foundation Grant (DMS 1501059)   as well as by a von Neumann Fellowship at the IAS   funded by the Fund for Mathematics and the Friends of the Institute for Advanced Study.}

\begin{document}

\begin{abstract}
	Gelfand-Tsetlin polytopes are classical objects in algebraic combinatorics arising in the representation theory of $\mathfrak{gl}_n(\mathbb{C})$.  The integer point transform of the Gelfand-Tsetlin polytope $\mathrm{GT}(\lambda)$ projects to the Schur function $s_{\lambda}$. Schur functions form a distinguished basis of the ring of symmetric functions; they are also special cases of Schubert polynomials $\s_{w}$ corresponding to Grassmannian permutations. 
	
	For any permutation $w \in S_n$ with column-convex Rothe diagram, we construct a polytope  $\p_{w}$ whose integer point transform projects to the Schubert polynomial $\s_{w}$. Such a construction has been sought after at least since the construction of twisted cubes by Grossberg and Karshon in 1994, whose integer point transforms project to Schubert polynomials $\s_{w}$ for all $w \in S_n$. However, twisted cubes are not honest polytopes; rather one can think of them as signed polytopal complexes. Our polytope $\p_{w}$ is a convex polytope. We also show that $\p_{w}$ is a Minkowski sum of Gelfand-Tsetlin polytopes of varying sizes. When the permutation $w$ is Grassmannian, the Gelfand-Tsetlin polytope is recovered. We conclude by showing that the Gelfand-Tsetlin polytope is a flow polytope. 
\end{abstract}
\date{\today}
\maketitle

\section{Introduction}
\label{sec:intro}
 
Schubert polynomials, introduced by Lascoux and Sch\"utzenberger in 1982 \cite{LS}, are extensively studied in  algebraic combinatorics \cite{BJS, FK1993, laddermoves, nilcoxeter, thomas, prismtableaux, lenart, manivel, multidegree, KM, sottile}.   They represent cohomology classes of Schubert cycles in flag varieties, and they generalize Schur functions, a distinguished basis of the ring of symmetric functions. 

A well-known property of the Schur function $s_{\lambda}$ is that it is a projection of the integer point transform of the Gelfand-Tsetlin polytope $\GT(\lambda)$.  This has inspired the following natural question for Schubert polynomials:

\medskip

\noindent{\bf Question 1.} {\it For $w\in S_n$, is there a natural polytope $\p_{w}$ and a projection map $\pi_{w}$ such that the projection of the integer point transform of $\p_{w}$ under the map $\pi_{w}$ equals the Schubert polynomial $\S_{w}$?}
 
\medskip

The construction of twisted cubes by Grossberg and Karshon in 1994 \cite{botttowers} is the first attempt at an answer to the above question. The integer point transforms of twisted cubes project to any Schubert polynomial.   Indeed, Grossberg and Karshon show that for both flag and Schubert varieties, their (virtual) characters are projections of integer point transforms of twisted cubes.
The one catch with twisted cubes is that they are not always honest polytopes; intuitively one can think of them as signed polytopal complexes. For the Grassmannian case they do not yield the Gelfand-Tsetlin polytope. Kiritchenko's beautiful work \cite{divdiff} explains how to make certain corrections to the Grossberg-Karshon twisted cubes in order to obtain the  Gelfand-Tsetlin polytope for Grassmannian permutations.

\medskip

Recall that given a partition $\lambda = (\lambda_1,\dots,\lambda_n)\in \mathbb{Z}^n_{\geq 0}$, the \textbf{Gelfand-Tsetlin polytope} $\GT(\lambda)$ is the set of all nonnegative triangular arrays
	\begin{center}
		\begin{tabular}{ccccccc}
			$x_{11}$&&$x_{12}$&&$\cdots$&&$x_{1n}$\\
			&$x_{22}$&&$x_{23}$&$\cdots$&$x_{2n}$&\\
			&&$\cdots$&&$\cdots$&&\\
			&&$x_{n-1,n-1}$&&$x_{n-1,n}$&&\\
			&&&$x_{nn}$&&&
		\end{tabular}
	\end{center}
	such that 
	\begin{align*}
		x_{in}=\lambda_i &\mbox{ for all } 1\leq i\leq n,\\
		x_{i-1,j-1}\geq x_{ij}\geq x_{i-1,j} &\mbox{ for all } 1\leq i \leq j\leq n.
	\end{align*} 

To state our main result, which is a partial answer to  Question 1, we need to consider the Minkowski sums of Gelfand-Tsetlin polytopes of partitions with different lengths.

Fix $n$, and for each $k\in[n]$, let $\lambda^{(k)}$ be a partition with $k$ parts (with empty parts allowed). We wish to study the Minkowski sum 
\[\GT(\lambda^{(1)})+\GT(\lambda^{(2)})+\cdots+\GT(\lambda^{(n)}).\]
To make this Minkowski sum well-defined, we embed $\mathbb{R}^{\binom{k+1}{2}}$ into $\mathbb{R}^{\binom{n+1}{2}}$ for each $k$. To do this, let $y_{ij}$ be coordinates of $\mathbb{R}^{\binom{k+1}{2}}$ and $x_{ij}$ be coordinates of $\mathbb{R}^{\binom{n+1}{2}}$ as in the definition of the Gelfand-Tsetlin polytope. The embedding is given by
\[y_{ij}\mapsto x_{i,j+n-k} \text{ for all }i+j\leq k+1.\]

Given a column-convex diagram $D$ with $n$ rows, we associate to it a family of partitions $\mathrm{Par}_D=\{\lambda^{(1)}, \ldots, \lambda^{(n)}\}$ in the following way. The shape $\lambda^{(i)}$, $i \in [n]$, has $i$ parts and  is obtained from $D$ by ordering the columns of $D$ whose lowest box is in the $i$th row in decreasing fashion and reading off $\lambda^{(i)}$ according to the French notation. Note that  $\lambda^{(i)}$ is empty if there is no column of $D$ whose lowest box is in the $i$th row.

\begin{theorem} 
	\label{thm:gtsum} 
	The character $\mathfrak{s}_D$ of the flagged Schur module associated to a column-convex diagram $D$ with $n$ rows and $\mathrm{Par}_D=\{\lambda^{(1)}, \ldots, \lambda^{(n)}\}$ is a projection of the integer point transform of
\begin{equation} \label{eq:ms} 
	\p_D\coloneqq\mathrm{GT}(\lambda^{(1)})+\mathrm{GT}(\lambda^{(2)})+\cdots+\mathrm{GT}(\lambda^{(n)})
\end{equation}
with the embedding specified above. We obtain $\mathfrak{s}_D(x_i)$ from the integer point transform $\sigma_{\p_D}({x}_{ij})$ via the specialization
\[
x_{ij}\mapsto
\begin{cases}
x_1&\text{when }\,i=1,\\
x_{i-1}^{-1} x_{i}&\text{when }\,i>1.
\end{cases}
\] 
\end{theorem}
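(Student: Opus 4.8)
The plan is to sandwich both sides of the asserted identity between one combinatorial model and then to track a single monomial. First I would pin down the left side: because $D$ is column-convex, the flagged Schur module of $D$ lies in a class whose character is a positive sum of monomials over a combinatorially described family $\mathcal{T}_D$ of flagged tableaux of $D$ (in the sense of the theory of Schur modules of column-convex, and more generally northwest, diagrams), so that $\mathfrak{s}_D=\sum_{T\in\mathcal{T}_D}x^{\mathrm{wt}(T)}$. I would then realize $\mathcal{T}_D$ as the set of lattice points of an auxiliary ``skew Gelfand--Tsetlin'' polytope $\mathcal{Q}_D$ built on a staircase array: the interlacing inequalities of a Gelfand--Tsetlin polytope, with the outer (hypotenuse) coordinates fixed to the sums $\sum_{k\ge i}\lambda^{(k)}_i$, together with further interlacing-type inequalities that impose the flag dictated by the rows of $D$. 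When all but one of the $\lambda^{(k)}$ are empty---the Grassmannian case---there is no flag to impose, $\mathcal{Q}_D$ reduces to an honest Gelfand--Tsetlin polytope, and this step is just the classical correspondence between Gelfand--Tsetlin patterns and semistandard tableaux.

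The heart of the proof is then the identity
\[
\p_D\cap\mathbb{Z}^{\binom{n+1}{2}}=\mathcal{Q}_D\cap\mathbb{Z}^{\binom{n+1}{2}},
\]
which I would approach directly from the Minkowski sum. Using the additivity $\GT(\mu+\nu)=\GT(\mu)+\GT(\nu)$ for partitions of equal length, each embedded summand $\GT(\lambda^{(k)})$ splits further into the highly constrained $0/1$-polytopes $\GT((1^{h}))$, one for each column of $D$ with lowest box in row $k$; thus $\p_D$ is a Minkowski sum of these elementary pieces, whose facet structure is transparent. Summing their support functions yields the facet description of $\p_D$, which I expect to coincide with the inequalities defining $\mathcal{Q}_D$. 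The delicate issue is integrality: a Minkowski sum of lattice polytopes is in general strictly larger than the integer Minkowski sum, and Gelfand--Tsetlin polytopes need not be normal, so one cannot simply decompose a lattice point of $\p_D$ into lattice points of the summands. Instead one must argue, using column-convexity and the fact that the embeddings sit as nested staircases, that every integer point of $\p_D$ already satisfies all of the interlacing and flag inequalities of $\mathcal{Q}_D$, and conversely that every lattice point of $\mathcal{Q}_D$ genuinely lies in the Minkowski sum. I expect this lattice-point comparison to be the main obstacle.

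With $\p_D$ and $\mathcal{Q}_D$ sharing their lattice points, the conclusion is a short computation. A lattice point $(a_{ij})$ of the staircase contributes the monomial $\prod_{i,j}x_{ij}^{a_{ij}}$ to $\sigma_{\p_D}$, and applying the stated specialization makes this monomial telescope: the exponent of $x_m$ becomes $\sum_j a_{mj}-\sum_j a_{m+1,j}$ for $m<n$ and $\sum_j a_{nj}$ for $m=n$. Under the tableau--Gelfand--Tsetlin correspondence this is precisely the weight $\mathrm{wt}(T)$ of the flagged tableau $T$ attached to $(a_{ij})$, so summing over all lattice points shows that $\sigma_{\p_D}$ specializes to $\sum_{T\in\mathcal{T}_D}x^{\mathrm{wt}(T)}=\mathfrak{s}_D$, completing the proof.
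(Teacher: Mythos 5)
Your plan hinges on two identifications that you only ``expect'': (a) that $\mathfrak s_D$ for a general column-convex $D$ is the generating function of a family of flagged tableaux realizable as the lattice points of a polytope $\mathcal Q_D$ cut out by interlacing plus row-flag inequalities, and (b) that $\p_D$ and $\mathcal Q_D$ have the same lattice points. Neither is established, and (a) is essentially the theorem itself in disguise. For column-convex (even Rothe) diagrams the known filling models are not of this form: Schubert polynomials are generated by balanced labellings or pipe dreams, not by arbitrary column-strict row-flagged fillings of $D(w)$, and the extra conditions are not linear inequalities on the filling. A concrete failure: take $w=1432$, so $D(w)$ has $\lambda^{(3)}=(1,1,0)$, $\lambda^{(2)}=(1,0)$, $\lambda^{(1)}=(0)$. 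The honest facet description of $\p_D$ (Proposition~\ref{prop:gtsum}, cf.\ Example~\ref{ex:3rows}) contains the zigzag inequality $x_{12}-x_{23}+x_{33}\ge\lambda^{(3)}_2=1$, which is neither an interlacing nor a flag condition; dropping it, as your $\mathcal Q_D$ would, admits the extra lattice point $x_{23}=2$, $x_{33}=0$, whose specialization $x_2^3$ does not occur in $\mathfrak S_{1432}$ (and naive column-strict flagged fillings of $D(w)$ likewise overcount, giving $x_1x_2x_3$ twice). So the polytope you would need is forced to carry exactly these zigzag facets, and producing them together with a weight-preserving bijection to a monomial basis of the flagged Schur module is the substance of the problem, not a routine step. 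Similarly, the facet description of the Minkowski sum and the lattice-point comparison that you correctly identify as ``the main obstacle'' are left unproved; in the paper the former alone is Proposition~\ref{prop:gtsum}, proved by a delicate induction on $n$ and on $|\lambda^{(n)}|$ using the decomposition of Lemma~\ref{lem:gtsum}.

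For comparison, the paper sidesteps any tableau model. It uses Magyar's recursion (Proposition~\ref{prop:sd}), expressing $\mathfrak s_D$ as a monomial times $\pi_1\cdots\pi_{n-1}\,\mathfrak s_{\widetilde D}$, and then realizes each Demazure operator $\pi_{m-1}$ geometrically: the slices $\p_D^{(m)}$ obtained by setting $x_{1n}=\cdots=x_{mn}=0$ are parapolytopes, and Lemmas~\ref{lem:Di} and~\ref{lem:Di2} show that passing from $\p_D^{(m)}$ to $\p_D^{(m-1)}$ stretches each fiber of $\varphi_m$ by exactly the amount that turns $s_{\p_D^{(m)}}$ into $\pi_{m-1}s_{\p_D^{(m)}}$; verifying the required fiber lengths is where the facet description of Proposition~\ref{prop:gtsum} is used. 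If you want to salvage your approach, you would need to either prove a lattice-point/tableau model equivalent to Proposition~\ref{prop:gtsum} together with a character formula matching it, or switch to an inductive mechanism like the one above; as written, the argument assumes its hardest steps.
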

 
In the case that $D$ is the Rothe diagram of a permutation $w \in S_n$, the character $\mathfrak{s}_D$ of the flagged Schur module associated to $D$ is the Schubert polynomial $\mathfrak{S}_{w}$. Thus, Theorem \ref{thm:gtsum} answers Question 1 for permutations whose Rothe diagram is column-convex. The necessary background for and the proof of Theorem \ref{thm:gtsum} is in Section \ref{sec:proj}. It is interesting to note that the Newton polytope of a Schubert polynomial is a generalized permutahedron \cite{FMS, MTY}; thus, the affine projection specified in Theorem \ref{thm:gtsum} maps $\p_{D(w)}$ to a generalized permutahedron for column-convex $D(w)$.

Theorem \ref{thm:gtsum} recovers Gelfand-Tsetlin polytopes for Grassmannian permutations.  We conclude our paper by showing  in Theorem \ref{thm:gtflowpolytope} that Gelfand-Tsetlin polytopes are flow polytopes and by showing how to view $\p_D$ in Theorem \ref{thm:gtsum} in the context of flow polytopes. 

\begin{theorem} \label{thm:gtflowpolytope}
	$\GT(\lambda)$ is integrally equivalent to the flow polytope $\mathcal{F}_{G_\lambda}$.
\end{theorem}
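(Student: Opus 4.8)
The plan is to exhibit an explicit affine isomorphism $\phi$ between the ambient affine lattices that carries $\GT(\lambda)$ onto $\mathcal{F}_{G_\lambda}$. The key observation is that every facet inequality of $\GT(\lambda)$ merely compares two coordinates --- these are the interlacing relations $x_{i-1,j-1}\ge x_{ij}\ge x_{i-1,j}$ --- while the only equalities are the boundary conditions $x_{in}=\lambda_i$. I would therefore let $\phi$ send a Gelfand--Tsetlin pattern to the tuple of \emph{slacks} of its interlacing inequalities, equivalently the consecutive differences of its entries read along the rows and along the diagonals of the triangular array, and identify each slack with the flow on the corresponding edge of $G_\lambda$. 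With this choice the interlacing inequalities of $\GT(\lambda)$ become exactly the edgewise nonnegativity constraints $f_e\ge 0$ of $\mathcal{F}_{G_\lambda}$, so the whole content of the theorem is that under $\phi$ the affine span of $\GT(\lambda)$ matches that of $\mathcal{F}_{G_\lambda}$; that is, that the linear relations among the slacks are precisely the flow--conservation equations together with the prescribed netflow.

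This reduces to a short telescoping computation. The vertex of $G_\lambda$ attached to a fixed interior entry of the pattern is incident to four edges; the two slacks on its incoming edges and the two on its outgoing edges add up to one and the same quantity --- a difference of two entries of the pattern, expanded as a sum of slacks in the two available ways --- so (sum of incoming flows) $=$ (sum of outgoing flows) there automatically. For the edges touching a boundary entry $x_{in}=\lambda_i$, the relevant slacks sum in pairs to $\lambda_i-\lambda_{i+1}$ (and to $\lambda_n$ in the extreme case), which are exactly the netflow values recorded at the sources of $G_\lambda$. Hence $\phi$ maps $\aff(\GT(\lambda))$ into $\aff(\mathcal{F}_{G_\lambda})$, and, the edgewise inequalities transporting correctly, $\phi(\GT(\lambda))\subseteq\mathcal{F}_{G_\lambda}$.

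To promote this to an isomorphism onto $\mathcal{F}_{G_\lambda}$, I would write the inverse down explicitly: from a flow, the entry $x_{ij}$ is recovered as $\lambda_i$ minus the sum of the appropriate slacks along a path in $G_\lambda$ from a source to the vertex of $x_{ij}$, and flow--conservation says precisely that this path-sum is independent of the path chosen, so the reconstruction is well defined; nonnegativity of the flow then forces the reconstructed array to satisfy every interlacing inequality, giving $\phi^{-1}(\mathcal{F}_{G_\lambda})\subseteq\GT(\lambda)$. Since both $\phi$ and $\phi^{-1}$ are given by integer matrices --- differences in one direction, partial sums in the other --- the map $\phi$ restricts to a bijection between $\aff(\GT(\lambda))\cap\mathbb{Z}^{\binom{n+1}{2}}$ and the lattice of integer flows of $\mathcal{F}_{G_\lambda}$, which is exactly an integral equivalence. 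Alternatively one can avoid the explicit inverse: injectivity of $\phi$ together with the dimension count $\dim\GT(\lambda)=\binom{n}{2}=|E(G_\lambda)|-|V(G_\lambda)|+1=\dim\mathcal{F}_{G_\lambda}$ forces $\phi$ to be an affine isomorphism of the two spans, after which the matching of facet descriptions yields $\phi(\GT(\lambda))=\mathcal{F}_{G_\lambda}$.

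The work in all of this is organizational rather than deep. The only real obstacle is to have pinned down the description of $G_\lambda$ carefully enough --- which vertices are interior and which are sources or the sink, and how every edge is oriented --- so that the telescoping identities coincide on the nose with flow conservation, and to keep the boundary conventions straight (the empty parts of the shorter shapes and the single sink that absorbs the extreme differences of each row). Once that is in place, each assertion above is a one-line verification, with $|E(G_\lambda)|-|V(G_\lambda)|+1=\binom{n}{2}$ serving as a convenient running consistency check.
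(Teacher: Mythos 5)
Your map is exactly the paper's: the slacks $a_{ij}=x_{i-1,j-1}-x_{ij}$ and $b_{ij}=x_{ij}-x_{i-1,j}$ of the interlacing inequalities are taken as the flows on the two edges leaving $v_{ij}$, with the inverse recovered by partial sums of slacks from the fixed boundary entries (e.g.\ $x_{ij}=\lambda_j+\sum_{k=2}^{i}b_{kj}$), so your proposal is correct and follows essentially the same route, simply writing out the telescoping conservation checks that the paper dismisses as ``easily checked.'' The one convention to fix when matching the paper's $G_\lambda$ is the sink: its netflow is $\lambda_n-\lambda_1$ (absorbing the source netflows $\lambda_{j-1}-\lambda_j$), not $\lambda_n$.
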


Section \ref{sec:gtflowpolytope}  contains the background for and the proof of Theorem \ref{thm:gtflowpolytope}, as well as some of its corollaries.

\section{Polytopes projecting to Schubert polynomials}
\label{sec:proj}

This section is devoted to proving Theorem~\ref{thm:gtsum} and explaining the relevant terminology. We start by defining diagrams, flagged Schur modules, and their characters.

\subsection{Background}

A \textbf{diagram} is a finite subset of $\mathbb{N} \times \mathbb{N}$. Its elements $(i,j)\in D$ are called \textbf{boxes}. We will think of $\mathbb{N}\times \mathbb{N}$ as a grid of boxes in matrix notation, so $(1,1)$ is the topmost and leftmost box. Canonically associated to each permutation is its Rothe diagram.
\begin{definition}
	The \textbf{Rothe diagram} of a permutation $w\in S_n$ is the collection of boxes 
	\[D(w)=\{(i,j) \mid  1\leq i,j\leq n,\, w(i)>j,\, w^{-1}(j)>i \}.\] 
	We can visualize $D(w)$ as the set of boxes remaining in the $n\times n$ grid after crossing out all boxes below or to the right of $(i,w(i))$ for each $i\in [n]$.
\end{definition}

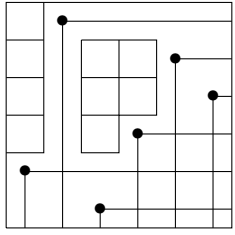
\begin{figure}[ht] 
	\begin{tikzpicture}
	\draw (0,0)--(3,0)--(3,3)--(0,3)--(0,0);
	
	\draw[] 
	(3,2.75)
	-- (.75,2.75) node {$\bullet$}
	-- (0.75,0);
	
	\draw[] 
	(3,2.25)
	-- (2.25,2.25) node {$\bullet$}
	-- (2.25,0);
	
	\draw[] 
	(3,1.75)
	-- (2.75,1.75) node {$\bullet$}
	-- (2.75,0);
	
	\draw[] 
	(3,1.25)
	-- (1.75,1.25) node {$\bullet$}
	-- (1.75,0);
	
	\draw[] 
	(3,.75)
	-- (.25,.75) node {$\bullet$}
	-- (0.25,0);
	
	\draw[] 
	(3,.25)
	-- (1.25,.25) node {$\bullet$}
	-- (1.25,0);
	
	\draw (.5,3)--(.5,1)--(0,1);
	\draw (0,2.5)--(.5,2.5);
	\draw (0,2)--(.5,2);
	\draw (0,1.5)--(.5,1.5);
	
	\draw (1,2.5)--(1,1)--(1.5,1)--(1.5,1.5)--(2,1.5)--(2,2.5)--(1,2.5);
	\draw (1,2)--(2,2);
	\draw (1,1.5)--(1.5,1.5);
	\draw (1.5,2.5)--(1.5,1.5);
	\end{tikzpicture}
	\caption{The permutation $w=256413$ is column-convex and has Rothe diagram \\ $D(w)=\{(1,1),(2,1),(3,1),(4,1),(2,3),(3,3),(4,3),(2,4),(3,4)\}$.}
	\label{fig:rothe}
\end{figure}

\begin{definition}
	A diagram $D$ is \textbf{column-convex} if for each $j$, the set $\{i \mid (i,j)\in D \}$ is an interval in $\mathbb{N}$.
\end{definition}
Note that a Rothe diagram $D(w)$ is column-convex if and only if $w$ avoids the patterns $3142$ and $4132$.

Let $D$ be a diagram with $n$ rows. Denote by $\Sigma_D$ the symmetric group on the boxes in $D$. Let $\mathrm{Col}(D)$ be the subgroup of $\Sigma_D$ permuting the boxes of $D$ within each column, and define $\mathrm{Row}(D)$ similarly for rows. Let $\mathcal{T}_D$ denote the $\mathbb{C}$-vector space with basis indexed by fillings $T\colon D\to [n]$ of $D$. Observe that $\Sigma_D$, $\mathrm{Col}(D)$, and $\mathrm{Row}(D)$ act on $\mathcal{T}_D$ on the right by permuting the filled boxes.

Define idempotents $\alpha_D$, $\beta_D$ in the group algebra $\mathbb{C}[\Sigma_D]$ by
\[
\alpha_D = {1 \over |\mathrm{Row}(D)|} \sum_{w \in \mathrm{Row}(D)} w,\mbox{\hspace{3ex}}
\beta_D = {1 \over |\mathrm{Col}(D)|} \sum_{w \in \mathrm{Col}(D)} \mathrm{sgn}(w) w ,
\]
where $\mathrm{sgn}(w)$ is the sign of the permutation $w$. Given a filling $T\in\mathcal{T}_D$, define $e_T\in\mathcal{T}_D$ to the be the linear combination
\[e_T=T\cdot\alpha_D\beta_D. \]

Identify $\mathcal{T}_D$ with the tensor product $V^{\otimes N}$, where $V=\mathbb{C}^n$ and $N$ is the number of boxes of $D$, in the following manner. First, fix an order on the boxes of $D$. Then read each filling $T$ in this order to obtain a word $i_1,\ldots,i_N$ on $[n]$, and identify this word with the tensor $e_{i_1}\otimes e_{i_2}\otimes\cdots\otimes e_{i_N} \in V^{\otimes N}$, where $e_1,\ldots,e_n$ is the standard basis of $\mathbb{C}^n$. As $GL_n(\mathbb{C})$ acts on $V$, it acts diagonally on $V^{\otimes N}$ by acting on each component. This left action of $GL_n(\mathbb{C})$ on $\mathcal{T}_D$ commutes with the right action of $S_D$. Thus, the subspace of $\mathcal{T}_D$ spanned by all elements $e_T$ is a submodule, called the \textbf{Schur module} of $D$.

Call a filling $T$ of $D$ \textbf{row-flagged} if $T(i,j)\leq i$ for all $i,j$. Let $B_n$ be the subgroup of $GL_n(\mathbb{C})$ consisting of upper triangular matrices. The subspace of $\mathcal{T}_D$ spanned by the elements $e_T$ for $T$ row-flagged forms a $B_n$-submodule of $\mathcal{T}_D$, called the flagged Schur module of $D$.

\begin{definition}
	The \textbf{flagged Schur module} $\mathcal{S}_D$ of a diagram $D$ is the $B_n$-submodule of $\mathcal{T}_D$ spanned by \[\{e_T \mid  T\mbox{ is a row-flagged filling of }D\}.\] 
	The \textbf{formal character} $\mathrm{char}(\mathcal{S}_D)$, denoted by $\mathfrak{s}_D$, is the polynomial 
	\[\mathfrak{s}_D=\mathrm{char}(\mathcal{S}_D)(x_1,\ldots,x_n) = \mathrm{Trace}(X\colon \mathcal{S}_D\to\mathcal{S}_D), \]
	where $X$ is the diagonal matrix in $B_n$ with diagonal entries $x_1,\ldots,x_n$.
\end{definition}

A particularly important subclass of characters of flagged Schur modules is that of Schubert polynomials as explained in Theorem \ref{thm:kp} below. Schubert polynomials are associated to permutations, and they admit various combinatorial and algebraic definitions. For a permutation $w\in S_n$, we will define the Schubert polynomial $\mathfrak{S}_w$ via divided difference operators $\partial_i$ on polynomials. 
\begin{definition}
	The Schubert polynomial of the long word $w_0 \in S_n$ $(w_0(i)=n-i+1$ for $1\leq i\leq n)$ is defined as  \[\mathfrak{S}_{w_0}\coloneqq x_1^{n-1}x_2^{n-2}\cdots x_{n-1}.\]  
	For $w\neq w_0$, there exists $i\in [n-1]$ such that $w(i)<w(i+1)$. For any such~$i$, the \textbf{Schubert polynomial} $\mathfrak{S}_{w}$ is defined by
	\[\mathfrak{S}_{w}\coloneqq\partial_i \mathfrak{S}_{ws_i},\]
	where
	\[\partial_i (f)= \frac{f-s_if}{x_i-x_{i+1}}=
	\frac{f(x_1,\ldots,x_n)-f(x_1,\ldots,x_{i-1},x_{i+1},x_{i},\ldots,x_n)}{x_i-x_{i+1}},\] 
	and $s_i$ is the transposition swapping $i$ and $i+1$. The operators $\partial_i$ can be shown to satisfy the braid relations, so the Schubert polynomials $\mathfrak{S}_{w}$ are well-defined.
\end{definition}
Schubert polynomials appear as the characters of flagged Schur modules of Rothe diagrams.
\begin{theorem}[\cite{KP}] \label{thm:kp}
	Let $w\in S_n$ be a permutation, $D(w)$ be the Rothe diagram of $w$, and $\mathfrak{s}_{D(w)}$ be the character of the associated flagged Schur module $\mathcal{S}_{D(w)}$. Then, 
	\[\mathfrak{S}_w(x_1,\ldots,x_n) = \mathfrak{s}_{D(w)}(x_1,\ldots,x_n). \]
\end{theorem}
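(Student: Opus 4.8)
The plan is to reduce the statement to a combinatorial identity between two explicit polynomials by finding a monomial basis of the flagged Schur module $\mathcal{S}_{D(w)}$. First I would observe that the character $\mathfrak{s}_{D(w)}$ has a purely combinatorial description: since $\mathcal{S}_{D(w)}$ is spanned by the vectors $e_T = T\cdot\alpha_{D(w)}\beta_{D(w)}$ over row-flagged fillings $T$, and the torus-weight of $e_T$ is $\prod_{(i,j)\in D(w)} x_{T(i,j)}$, one has $\mathfrak{s}_{D(w)} = \sum_T c_T\, x^{\mathrm{wt}(T)}$ for suitable integer coefficients, where the sum can be organized over a distinguished set of fillings giving a basis. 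The standard tool here is the theory of the ``standard tableaux'' of the diagram $D(w)$: one orders the boxes of $D(w)$ by columns, notes that $\alpha_D\beta_D$ picks out (up to the column-antisymmetrizer) the column-strict fillings, and shows that the $e_T$ for $T$ semistandard on $D(w)$ form a basis. This is exactly the content of the Kra\'skiewicz--Pragacz work \cite{KP}, so in a self-contained write-up I would either cite it directly or sketch the straightening argument: $\beta_D$ antisymmetrizes within columns, so a nonzero $e_T$ forces $T$ to be strictly increasing down each column, and then $\alpha_D$ on top plus a filtration by a suitable term order on weights shows the column-strict row-flagged fillings are linearly independent and spanning.

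Second, I would identify the generating function of column-strict row-flagged fillings of $D(w)$ with the Schubert polynomial via the divided-difference recursion. The cleanest route is to check the two defining properties of $\mathfrak{S}_w$: (i) for $w = w_0$ the Rothe diagram is the staircase $\delta_{n-1}=(n-1,n-2,\dots,1,0)$, and the only row-flagged column-strict filling places $T(i,j)=i$, giving weight $x_1^{n-1}x_2^{n-2}\cdots x_{n-1} = \mathfrak{S}_{w_0}$; (ii) whenever $w(i)<w(i+1)$ one has $\mathfrak{s}_{D(w)} = \partial_i\, \mathfrak{s}_{D(ws_i)}$. Property (ii) is the heart of the matter: one must relate the row-flagged column-strict fillings of $D(w)$ to those of $D(ws_i)$ (which differs from $D(w)$ by removing one box and is obtained by swapping the appropriate crossing-out pattern), and show that applying $\partial_i$ to the generating function of the latter produces the generating function of the former. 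This is done by a sign-reversing involution / pairing argument on fillings: the operator $\partial_i$ acts on weights by a Demazure-type operator, and one exhibits an explicit bijection, box by box, between tableaux of $D(ws_i)$ and tableaux of $D(w)$ compatible with the action $\pi_i$ on monomials.

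The main obstacle is establishing step~(ii), the compatibility of the tableau generating function with divided differences. The subtlety is that $D(w)$ and $D(ws_i)$ are related in a somewhat intricate way when $w(i)<w(i+1)$ (rows $i$ and $i+1$ of the diagram get modified), and the naive weight-level statement $\partial_i$ applied to a single monomial is not a single monomial, so one needs the correct grouping of fillings into $\partial_i$-orbits. The standard resolution, which I would follow, is Kra\'skiewicz--Pragacz's: represent the flagged Schur module as an iterated extension and use the fact that the Demazure character of the relevant $B_n$-module satisfies precisely this recursion by Demazure's character formula, so that step~(ii) becomes the assertion that $\mathcal{S}_{D(w)}$ is a Demazure module with the expected highest weight, reducing the whole theorem to the representation-theoretic statement $\mathcal{S}_{D(w)} \cong $ Demazure module $V_w$. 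Given that this is a cited result (\cite{KP}), for the purposes of this paper I would simply invoke Theorem~\ref{thm:kp} as stated and defer its proof to the reference; the contribution of the present section is the polytopal model, not a reproof of \cite{KP}.
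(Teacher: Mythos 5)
The paper offers no proof of this statement---it is quoted directly from Kra\'skiewicz--Pragacz \cite{KP}---so your decision to invoke the citation is exactly what the paper does, and your sketch of the underlying argument (column-strict row-flagged fillings giving a basis, then compatibility with divided differences via Demazure modules) is a fair summary of the cited proof. One small slip in the sketch: when $w(i)<w(i+1)$ the permutation $ws_i$ is \emph{longer}, so $D(ws_i)$ has one more box than $D(w)$ (indeed $D(w)$ is obtained from $D(ws_i)$ by deleting the box $(i,w(i))$ and swapping rows $i$ and $i+1$), not the reverse as your parenthetical suggests.
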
 

\subsection{Minkowski sums of Gelfand-Tsetlin polytopes}
We now move towards proving Theorem~\ref{thm:gtsum}, which for any column-convex diagram $D$, relates the character $\mathfrak s_D$ with the Minkowski sum \[\p_D=\GT(\lambda^{(1)}) + \cdots + \GT(\lambda^{(n)})\] defined in equation \eqref{eq:ms}. To begin, we describe this Minkowski sum in terms of inequalities. We will need the following Lemma~\ref{lem:gtsum}, which is proved in Section \ref{sec:gtflowpolytope}.

\medskip

\begin{lemma} \label{lem:gtsum}
	If $\lambda$ has $n$ parts, then the Gelfand-Tsetlin polytope $\GT(\lambda)$ decomposes as a Minkowski sum:
	\[\GT(\lambda) = \sum_{k=1}^{n}(\lambda_k-\lambda_{k+1})\mathrm{GT}(1^k0^{n-k}).\]
\end{lemma}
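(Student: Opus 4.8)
The plan is to prove the two inclusions
\[
\sum_{k=1}^{n}(\lambda_k-\lambda_{k+1})\,\GT(1^k0^{n-k})\ \subseteq\ \GT(\lambda)\ \subseteq\ \sum_{k=1}^{n}(\lambda_k-\lambda_{k+1})\,\GT(1^k0^{n-k})
\]
separately, adopting the convention $\lambda_{n+1}:=0$, so that every coefficient $\lambda_k-\lambda_{k+1}$ is nonnegative because $\lambda$ is a partition. The first inclusion I would dispatch immediately from the shape of the defining system: every inequality cutting out a Gelfand--Tsetlin polytope is either a nonnegativity constraint or an interlacing inequality $x_{i-1,j-1}\ge x_{ij}$ or $x_{ij}\ge x_{i-1,j}$, all of which are homogeneous and independent of the partition, hence preserved by nonnegative linear combinations of points. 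The partition enters only through the equalities $x_{in}=\lambda_i$, and since $(1^k0^{n-k})_i=1$ for $i\le k$ and $0$ otherwise,
\[
\sum_{k=1}^{n}(\lambda_k-\lambda_{k+1})(1^k0^{n-k})_i=\sum_{k=i}^{n}(\lambda_k-\lambda_{k+1})=\lambda_i .
\]
Thus $\sum_k(\lambda_k-\lambda_{k+1})y^{(k)}\in\GT(\lambda)$ whenever $y^{(k)}\in\GT(1^k0^{n-k})$, which is the first inclusion.

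For the reverse inclusion I would isolate the additivity statement
\[
\GT(\alpha+\beta)=\GT(\alpha)+\GT(\beta)\qquad\text{for partitions }\alpha,\beta\text{ with the same number of parts,}
\]
together with the trivial homogeneity $\GT(c\alpha)=c\,\GT(\alpha)$ for $c\ge 0$. Granting these, the lemma follows by writing $\lambda=\sum_{k=1}^{n}(\lambda_k-\lambda_{k+1})(1^k0^{n-k})$ and peeling off one summand at a time, which is legitimate because each partial sum $\sum_{k\ge j}(\lambda_k-\lambda_{k+1})(1^k0^{n-k})$ is again a partition with $n$ parts (a nonnegative combination of weakly decreasing sequences is weakly decreasing).

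I would establish the additivity statement by induction on the number $m$ of parts, using the recursive structure of Gelfand--Tsetlin arrays: a point of $\GT(\kappa)$, for $\kappa$ a partition with $m$ parts, is the data of a partition $\xi$ with $m-1$ parts interlacing $\kappa$ (meaning $\kappa_i\ge\xi_i\ge\kappa_{i+1}$ for all $i$) together with a point of $\GT(\xi)$, the smaller array obtained after discarding the prescribed diagonal. Given $T\in\GT(\alpha+\beta)$ with associated interlacer $\xi$ (so $\alpha_{i+1}+\beta_{i+1}\le\xi_i\le\alpha_i+\beta_i$ for all $i$ and the sub-array of $T$ lies in $\GT(\xi)$), the crux is an elementary splitting: for each $i$, since $\xi_i\in[\alpha_{i+1}+\beta_{i+1},\,\alpha_i+\beta_i]=[\alpha_{i+1},\alpha_i]+[\beta_{i+1},\beta_i]$, we may pick $\nu_i\in[\alpha_{i+1},\alpha_i]$ and $\rho_i\in[\beta_{i+1},\beta_i]$ with $\nu_i+\rho_i=\xi_i$ (for instance $\nu_i=\min(\alpha_i,\xi_i-\beta_{i+1})$ and $\rho_i=\xi_i-\nu_i$). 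Then $\nu$ interlaces $\alpha$ and $\rho$ interlaces $\beta$; in particular $\nu$ and $\rho$ are themselves weakly decreasing, hence honest partitions, because any interlacer of a weakly decreasing sequence is weakly decreasing ($\nu_i\ge\alpha_{i+1}\ge\nu_{i+1}$). By the inductive hypothesis $\GT(\xi)=\GT(\nu+\rho)=\GT(\nu)+\GT(\rho)$, so the sub-array of $T$ equals $P+Q$ with $P\in\GT(\nu)$ and $Q\in\GT(\rho)$; reattaching the prescribed diagonal $\alpha$ to $(\nu,P)$ and $\beta$ to $(\rho,Q)$ gives points of $\GT(\alpha)$ and $\GT(\beta)$ whose sum is $T$. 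Together with the first inclusion (applied to the two partitions $\alpha,\beta$) this yields the additivity statement; the base case $m=1$ is the trivial identity of one-point sets.

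The step I expect to take the most care is not any single computation but the bookkeeping: matching the abstract recursive picture to the concrete array coordinates of the statement (checking that discarding the prescribed diagonal $(x_{in})_i$ really leaves a smaller Gelfand--Tsetlin array, and that its own prescribed diagonal interlaces $\lambda$ in exactly the sense used in the splitting), and tracking which inequalities of $\GT(\alpha)$ are inherited from $\GT(\nu)$ and which come from the $\alpha$-versus-$\nu$ interlacing. The splitting sub-lemma itself is a one-line interval computation once phrased this way. An alternative route, and presumably the one intended since this lemma is revisited in Section~\ref{sec:gtflowpolytope}: under the integral equivalence $\GT(\lambda)\cong\mathcal{F}_{G_\lambda}$ of Theorem~\ref{thm:gtflowpolytope}, the partition plays the role of a netflow vector, flow polytopes are Minkowski-additive in the netflow (a flow decomposes along any splitting of its netflow into pieces of the same sign pattern), and $\lambda=\sum_k(\lambda_k-\lambda_{k+1})(1^k0^{n-k})$ is precisely such a splitting.
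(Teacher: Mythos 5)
Your proof is correct, but it takes a genuinely different route from the paper. The paper never argues directly with the Gelfand--Tsetlin inequalities: it obtains Lemma~\ref{lem:gtsum} as a corollary of the integral equivalence $\GT(\lambda)\cong\mathcal{F}_{G_\lambda}$ (Theorem~\ref{thm:gtflowpolytope}) together with the standard fact (Lemma~\ref{prop:flowminkowskidecomposition}, quoted from the literature) that a flow polytope decomposes Minkowski-additively when its netflow vector is split into nonnegative pieces; since the netflow of $G_\lambda$ is built from the differences $\lambda_k-\lambda_{k+1}$, and the graphs $G_{1^k0^{n-k}}$ realize the unit netflows, the decomposition transfers back through the (affine, $\lambda$-linear) equivalence. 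This is exactly the alternative you sketch in your closing remark. What you do instead is a self-contained polyhedral argument: the easy inclusion by linearity of the interlacing inequalities and the telescoping identity $\sum_{k\geq i}(\lambda_k-\lambda_{k+1})=\lambda_i$, and the hard inclusion by reducing to the additivity $\GT(\alpha+\beta)=\GT(\alpha)+\GT(\beta)$, proved by induction on the number of parts via the recursive structure (fixed row, interlacing next row, smaller GT pattern) and the interval splitting $[\alpha_{i+1}+\beta_{i+1},\alpha_i+\beta_i]=[\alpha_{i+1},\alpha_i]+[\beta_{i+1},\beta_i]$; your explicit choice $\nu_i=\min(\alpha_i,\xi_i-\beta_{i+1})$ does satisfy all four interlacing bounds, and the peeling step is legitimate since each partial sum $\sum_{k\geq j}(\lambda_k-\lambda_{k+1})(1^k0^{n-k})$ has $i$-th entry $\lambda_{\max(i,j)}$, hence is a partition. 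The trade-off: your argument is elementary and independent of Section~\ref{sec:gtflowpolytope} (and in fact it proves the additivity statement $\GT(\lambda)+\GT(\mu)=\GT(\lambda+\mu)$, which the paper records as a separate lemma without proof), while the paper's route is shorter once the flow-polytope equivalence is in hand and is what motivates that section in the first place.
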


\begin{proposition} \label{prop:gtsum}
Let $\lambda^{(1)}, \dots, \lambda^{(n)}$ be partitions such that $\lambda^{(i)}$ has $i$ (possibly empty) parts. The Minkowski sum $\GT(\lambda^{(1)}) + \cdots + \GT(\lambda^{(n)})$ is defined by the following inequalities:
\begin{itemize}
\item for all $1 \leq i \leq j \leq n$, $x_{i-1,j-1} \geq x_{ij}$; and
\item for any positive integer $k$ and nonempty sequence $I$ of even length $0 \leq i_k < i_{k-1} < \cdots < i_1 < j_1 < j_2 < \cdots < j_k \leq n$, 
\[\sum_{s=1}^k x_{j_s - i_s, j_s} - \sum_{s=1}^{k-1} x_{j_{s+1}-i_s, j_{s+1}} \geq \sum_{s=0}^{i_k} \lambda_{j_1-s}^{(n-s)}, \tag{$*$}\]
with equality when $k=1$ and $j_1=i_1+1$.
\end{itemize}

\end{proposition}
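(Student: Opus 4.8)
The plan is to compute the facet inequalities of a Minkowski sum directly via support functions. Recall that for polytopes $P_1, \dots, P_n$, a linear functional $\varphi$ attains its maximum on $P_1 + \cdots + P_n$ at the (Minkowski sum of the) faces where $\varphi$ is maximized on each $P_k$, and the support function satisfies $h_{P_1 + \cdots + P_n}(\varphi) = \sum_k h_{P_k}(\varphi)$. So the first step is to describe, for each primitive inner normal direction $\varphi$ that could plausibly define a facet of $\p_D$, the face of each $\GT(\lambda^{(k)})$ in that direction and evaluate $\sum_k h_{\GT(\lambda^{(k)})}(\varphi)$. The candidate directions are read off from the two families of inequalities in the statement: the ``trivial'' ones $x_{i-1,j-1} \ge x_{ij}$ (which hold on every Gelfand-Tsetlin polytope and therefore on the sum, with the right-hand side $0$), and the more elaborate alternating sums $(*)$ indexed by the interleaved sequences $i_k < \cdots < i_1 < j_1 < \cdots < j_k$.

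Next I would verify that the functional $\varphi_I$ appearing on the left-hand side of $(*)$, namely $\sum_{s=1}^k x_{j_s - i_s, j_s} - \sum_{s=1}^{k-1} x_{j_{s+1}-i_s,j_{s+1}}$, when restricted to a single $\GT(\lambda^{(m)})$ under the embedding $y_{ij} \mapsto x_{i,j+n-m}$, has minimum value exactly $\sum_{s=0}^{i_k} \lambda^{(m)}_{j_1 - s} \cdot [\text{the } s\text{-th term survives for this } m]$, so that summing over $m = n, n-1, \dots, n-i_k$ produces precisely the right-hand side $\sum_{s=0}^{i_k}\lambda^{(n-s)}_{j_1-s}$. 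Concretely, on each $\GT(\lambda^{(m)})$ the relevant coordinates $x_{a,b}$ (those with $a \le b$ and $b > n-m$, i.e. inside the embedded triangle) are subject to the interlacing inequalities with top row fixed to $\lambda^{(m)}$; one minimizes the alternating expression by pushing entries down along chains of interlacing inequalities, and the minimum is governed by a lattice-path / chain argument. This is where the interleaving condition $i_k < \cdots < i_1 < j_1 < \cdots < j_k$ is used: it guarantees that the chosen coordinates lie along compatible NE-SW and NW-SE diagonals in the GT triangle so that the interlacing inequalities telescope, and it explains why only even-length sequences occur (each ``$+$'' term $x_{j_s-i_s,j_s}$ must be matched with a ``$-$'' term, except the extremes which are bounded by the fixed top row). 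The equality case $k=1$, $j_1 = i_1 + 1$ is the statement that $x_{1,j_1}$ has minimum $\lambda^{(n)}_{j_1} + \lambda^{(n-1)}_{j_1-1} + \cdots$, reflecting that the first (leftmost) entry in a GT row is determined, not just bounded, once one also sums over the smaller polytopes; I would check this as a degenerate instance of the general chain computation.

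Having shown every listed inequality is valid on $\p_D$ (via the support-function additivity) and is tight (by exhibiting, for each $\varphi_I$, compatible minimizing points in each $\GT(\lambda^{(k)})$ whose Minkowski sum achieves equality), the remaining task is completeness: that these inequalities, together with $x_{in} = \sum_k \lambda^{(k)}_i$-type normalizations built into the $\lambda^{(n)}$ summand, actually cut out $\p_D$ and not something larger. For this I would argue that the normal fan of $\p_D$ is the common refinement of the normal fans of the $\GT(\lambda^{(k)})$, whose rays are exactly the $\varphi_I$ (the normal fan of a single Gelfand-Tsetlin polytope has a known description), and then check that each maximal cone of this refinement is simplicial / the inequalities indexed by its rays suffice — equivalently, that any point satisfying all the inequalities can be written as a sum of points, one from each $\GT(\lambda^{(k)})$, by an explicit greedy decomposition (peel off the contribution of $\GT(\lambda^{(n)})$ first, then $\GT(\lambda^{(n-1)})$, etc., checking at each stage that the residual array still satisfies the defining inequalities of the smaller polytope). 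The main obstacle I anticipate is precisely this last point: bookkeeping the interleaving combinatorics to show the alternating-sum inequalities are not merely valid but form a complete (irredundant) facet description — in other words, correctly identifying which sequences $I$ give genuine facets and proving no others are needed. The support-function half of the argument is routine; the converse decomposition is where the careful induction lives.
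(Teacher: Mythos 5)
The first half of your plan (validity of the inequalities on $\p_D$ via additivity of support functions over Minkowski summands) is sound and is essentially what the paper does, except that the paper only needs the one-sided bound on each summand, obtained by telescoping the interlacing inequalities of each $\GT(\lambda^{(n-m)})$, rather than an exact computation of the minimum of $\varphi_I$ on each summand. The genuine gap is in the completeness direction ($Q \subseteq P$), which you correctly identify as the crux but do not actually prove; neither of your two suggested routes works as stated. The normal-fan route is flawed on two counts: the facet normals of a Minkowski sum are not simply the union of the facet normals of the summands (they are rays of the common refinement, and in dimension $\geq 3$ such rays typically arise as intersections of higher-codimension normal cones of the summands --- indeed this is exactly how the $k \geq 2$ inequalities in $(*)$ appear, e.g.\ $x_{12}-x_{23}+x_{33} \geq \lambda^{(3)}_2$ in Example~\ref{ex:3rows}, which is not a facet direction of any single summand), so asserting that ``the rays are exactly the $\varphi_I$'' is precisely the statement to be proved; and Gelfand-Tsetlin polytopes are far from simple, so the proposed check that the maximal cones of the refinement are simplicial would fail.

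Your greedy-peeling alternative is the right general idea, but peeling off all of $\GT(\lambda^{(n)})$, then $\GT(\lambda^{(n-1)})$, etc., leaves unaddressed the only hard step: how to choose the point to subtract and why the residual array still satisfies the full system of inequalities $(*)$. The paper makes this work only by refining the induction: using Lemma~\ref{lem:gtsum} it peels off one small polytope $\GT(1^m0^{n-m})$ at a time (so that $\GT(1^m0^{n-m})+\GT(\mu^{(n)})=\GT(\lambda^{(n)})$ with $\mu^{(n)}$ obtained by subtracting $1$ from each nonzero part), constructs an explicit $0/1$ pattern $(z_{ij})$ from the minimal indices $t_j$ where $x_{t_j-1,j-1}=x_{t_j,j}$, and verifies the inequalities $(*)$ for $(x_{ij}-z_{ij})$ by an insertion argument that modifies the sequences $I$ (inserting $j_s-1$ and $j_s-t_{j_s}$, and in a second case inserting $m,m+1,\dots,j_1-1$ and the corresponding $i$'s) so that the needed inequality is implied by an inequality already known for $(x_{ij})$. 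Without this construction, or a substitute for it, your argument is an outline whose decisive step is missing; additionally, any peeling argument must be set up so that it applies to all points of $Q$ (the paper works with integer points inside an induction compatible with integrality of the polytopes), a bookkeeping point your sketch also leaves open.
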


\begin{remark}
A simple calculation shows that if, for instance, $i_{s+1} = i_s$ for some $s$, then neither side of $(*)$ would change if we simply remove $i_{s+1}$ and $j_{s+1}$ from the sequence. Likewise, if $j_s = j_{s+1}$ for some $s$, then neither side would change if we remove $i_s$ and $j_s$ from the sequence. Therefore we may equivalently take the inequalities $(*)$ for sequences $0 \leq i_k \leq \cdots \leq i_1 < j_1 \leq \cdots j_k \leq n$.
\end{remark}

One should observe that the entries occurring on the left side of $(*)$ lie at the corners of a path that zigzags southeast and southwest inside the triangular array.

\begin{example}\label{ex:3rows}
	Suppose $n=3$. We first have inequalities $x_{11} \geq x_{22} \geq x_{33}$ and $x_{12} \geq x_{23}$ as with ordinary Gelfand-Tsetlin patterns. Then for $k=1$, we get equalities
	\[x_{11} = \lambda_1^{(3)}, \qquad x_{12} = \lambda_1^{(2)} + \lambda_2^{(3)}, \qquad x_{13} = \lambda_1^{(1)} + \lambda_2^{(2)} + \lambda_3^{(3)},\]
	as well as inequalities
	\[x_{22} \geq \lambda_2^{(3)}, \qquad x_{23} \geq \lambda_2^{(2)} + \lambda_3^{(3)}, \qquad \text{and} \qquad x_{33} \geq \lambda_3^{(3)}.\]
	Finally, for $k=2$, there is one more inequality, namely
	\[x_{12} - x_{23} + x_{33} \geq \lambda_2^{(3)}.\]
\end{example}

\begin{proof}[Proof of Proposition~\ref{prop:gtsum}]
Let $P = P(\lambda^{(1)}, \dots, \lambda^{(n)}) = \GT(\lambda^{(1)}) + \cdots + \GT(\lambda^{(n)})$, and let $Q$ be the polytope given by the inequalities above, $Q = Q(\lambda^{(1)}, \dots, \lambda^{(n)})$. We first show that $P \subseteq Q$. For any point $(x_{ij})_{1 \leq i \leq j \leq n} \in P$, choose, for each $0 \leq m < n$, points $(y_{ij}^{(n-m)})_{1 \leq i \leq j \leq m} \in \GT(\lambda^{(n-m)})$ summing to it, so that $x_{ij} = \sum_{s = 0}^{j-i} y_{i,j-s}^{(n-s)}$. In particular, $\GT(\lambda^{(n-m)})$ will contribute to a coordinate of the form $x_{j-i,j}$ if and only if $m \leq i$.

Inequalities of the form $x_{i-1,j-1} \geq x_{ij}$ are derived by summing the respective inequalities $y_{i-1,j-1-s}^{(n-s)} \geq y_{i,j-s}^{(n-s)}$ over all $0 \leq s \leq j-i$. For inequalities of type $(*)$, consider a sequence $I$, and suppose first that $0 \leq m \leq i_k$. Then
\[
\sum_{s=1}^k y^{(n-m)}_{j_s-i_s, j_s-m} - \sum_{s=1}^{k-1} y^{(n-m)}_{j_{s+1}-i_s, j_{s+1}-m} = y_{j_1 - i_1, j_1 - m}^{(n-m)} + \sum_{s=1}^{k-1} (y^{(n-m)}_{j_{s+1}-i_{s+1}, j_{s+1}-m} - y^{(n-m)}_{j_{s+1}-i_s,j_{s+1}-m})\geq \lambda^{(n-m)}_{j_1-m},\]
for each term in the sum is nonnegative by the defining inequalities of $\GT(\lambda^{(n-m)})$.
If instead $m>i_k$, then let $k'<k$ be the minimum value such that $m \leq i_{k'}$. Then
\begin{align*}
\sum_{s=1}^{k'} y^{(n-m)}_{j_s-i_s, j_s-m} - \sum_{s=1}^{k'} y^{(n-m)}_{j_{s+1}-i_s, j_{s+1}-m} &= \sum_{s=1}^{k'} (y^{(n-m)}_{j_{s}-i_{s}, j_{s}-m} - y^{(n-m)}_{j_{s+1}-i_s,j_{s+1}-m}) \geq 0
\end{align*}
since again each term in the sum is nonnegative. Summing these inequalities over all $m$ then gives the desired inequality. In the case that $k=1$ and $j_1=i_1+1$, we get equality since
\[x_{1,j_1} = \sum_{s=0}^{j_1-1} y_{1,j_1-s}^{(n-s)} = \sum_{s=0}^{i_1} \lambda_{j_1-s}^{(n-s)}.\]

To show $Q \subseteq P$, we induct on $n$ and then the size of $\lambda^{(n)}$. First suppose $\lambda^{(n)} = \varnothing$. The inequalities involving $x_{jj}$ are $x_{11} \geq x_{22} \geq \cdots \geq x_{nn}$, and, when $i_k = 0$,
\[\sum_{s=1}^{k-1} (x_{j_s-i_s,j_s}-x_{j_{s+1}-i_s,j_{s+1}}) + x_{j_k,j_k} \geq \lambda_{j_1}^{(n)} = 0\]
with equality if also $k=1$ and $j_1 = 1$. These imply that $x_{jj} = 0$ for all $1 \leq j \leq n$ and impose no additional constraints on the other entries. Removing the diagonal of entries $x_{jj}$ then yields a triangular array that satisfies the inequalities defining $Q(\lambda^{(1)}, \cdots, \lambda^{(n-1)})$. Therefore by induction
\[Q(\lambda^{(1)}, \dots, \lambda^{(n-1)}, \varnothing) = Q(\lambda^{(1)}, \dots, \lambda^{(n-1)}) = P(\lambda^{(1)}, \dots, \lambda^{(n-1)}) = P(\lambda^{(1)}, \dots, \lambda^{(n-1)}, \varnothing).\]

If $\lambda^{(n)} \neq \varnothing$, then let $m = \ell(\lambda^{(n)})$ be the number of nonzero parts. We will prove that $Q \subseteq \GT(1^m0^{n-m}) +Q'$, where we let $Q' = Q(\lambda^{(1)}, \dots, \lambda^{(n-1)}, \mu^{(n)})$ for $\mu^{(n)} = (\lambda^{(n)}_1-1, \dots, \lambda^{(n)}_m-1, 0, \dots, 0)$. This will prove the result by induction using Lemma \ref{lem:gtsum}   since then $\GT(1^m0^{n-m}) + \GT(\mu^{(n)}) = \GT(\lambda^{(n)})$.

Recall that Gelfand-Tsetlin polytopes are integral polytopes. Given any integer point  $(x_{ij}) \in Q$, set $t_j = 1$ for $1 \leq j \leq m$, while for $m < j \leq n$, set $t_j$ to be the minimum value such that $t_j > t_{j-1}$ and $x_{t_j-1,j-1} = x_{t_j,j}$ (if such an index exists, otherwise set $t_j = \infty$). Then define the point $(z_{ij})_{1 \leq i \leq j \leq n} \in \GT(1^m, 0^{n-m})$ by $z_{ij} = 1$ if $i \geq t_j$, otherwise $z_{ij}=0$.

We claim that $(x'_{ij}) = (x_{ij}-z_{ij}) \in Q'$. Our choice of $t_j$ guarantees that $x_{i-1,j-1} - x_{i,j} \geq 1$ whenever $z_{i-1,j-1}-z_{i,j} = 1$, which ensures that $x'_{i-1,j-1} \geq x'_{ij}$ for all $1 \leq i \leq j \leq n$. Therefore it suffices to show inequalities of type $(*)$.

Given any sequence $I$, suppose that for some $s$, $z_{j_s-i_{s-1},j_s}=0$ but $z_{j_s-i_s,j_s}=1$. Consider what happens to the left hand side of $(*)$ if we insert $j'=j_{s}-1$ between $j_{s-1}$ and $j_s$, and we insert $i'=j_s-t_{j_{s}}$ between $i_s$ and $i_{s-1}$ to get a new sequence $I'$. (Note that $j_{s-1} \leq j' < j_s$ and $i_s \leq i' < i_{s-1}$.) This reduces the left hand side of $(*)$ by
 
\begin{align*}
(x'_{j' - i_{s-1},j'}-x'_{j_s-i_{s-1},j_s}) - (x'_{j'-i',j'}-x'_{j_s-i',j_s}) &= 
(x'_{j_s-1-i_{s-1},j_s-1}-x'_{j_s-i_{s-1},j_s})-(x'_{t_{j_s}-1,j_s-1}-x'_{t_{j_s},j_s})\\
&= x'_{j_s-1-i_{s-1},j_s-1}-x'_{j_s-i_{s-1},j_s}\\ &\geq 0,
\end{align*}
while the right hand side of $(*)$ is unchanged. Thus $(*)$ for the sequence $I$ is implied by $(*)$ for the new sequence $I'$. Since $z_{j_s-i',j_s} = z_{t_{j_s},j_s} = 1$, by iteratively applying this procedure to the new sequence, we will eventually arrive at a sequence for which such an $s$ does not exist.

It therefore suffices to prove inequality $(*)$ in the case that there exists some $s'$ such that $z_{j_s-i_{s-1},j_s}=1$ and  $z_{j_s-i_s,j_s}= 1$ exactly when $s \leq s'$. If $j_1 \leq m$, then the left hand side of $(*)$ is
\begin{align*}
\sum_{s=1}^k x'_{j_s - i_s, j_s} - \sum_{s=1}^{k-1} x'_{j_{s+1}-i_s, j_{s+1}} &= \left(\sum_{s=1}^k x_{j_s - i_s, j_s}-s'\right) - \left(\sum_{s=1}^{k-1} x_{j_{s+1}-i_s, j_{s+1}}-s'+1\right)\\
&= \sum_{s=1}^k x_{j_s - i_s, j_s} - \sum_{s=1}^{k-1} x_{j_{s+1}-i_s, j_{s+1}} - 1,
\end{align*}
while the right hand side is
\[\mu_{j_1}^{(n)} + \sum_{s=1}^{i_k} \lambda_{j_1-s}^{(n-s)} = \sum_{s=0}^{i_k} \lambda_{j_1-s}^{(n-s)} -1,\]
so this inequality follows from the corresponding inequality for $(x_{ij}) \in Q$. If $j_1 > m$, then consider the sequence obtained by inserting $m, m+1, \dots, j_1-1$ before $j_1$, and $j_1-t_{j_1}, j_1-1-t_{j_1-1}, \dots, m+1-t_{m+1}$ after $i_1$ in the sequence. For $(x_{ij}) \in Q$, this yields the inequality
\[\left(\sum_{j = m}^{j_1-1} x_{t_{j+1}-1,j}+\sum_{s=1}^k x_{j_s-i_s, j_s}\right) - \left(\sum_{j =m}^{j_1-1} x_{t_{j+1},j+1} + \sum_{s=1}^{k-1} x_{j_{s+1}-i_s,j_{s+1}}\right) \geq \sum_{s=0}^{i_k} \lambda_{m-s}^{(n-s)}.\]
But $x_{t_{j+1}-1,j} = x_{t_{j+1},j+1}$, and the right side is strictly greater than $\sum_{s=0}^{i_k} \lambda_{j_1-s}^{(n-s)}$ (since $\lambda^{(n)}_{m} > 0 = \lambda^{(n)}_{j_1}$). Thus
\[\sum_{s=1}^k x_{j_s-i_s, j_s} - \sum_{s=1}^{k-1} x_{j_{s+1}-i_s,j_{s+1}} \geq  \sum_{s=0}^{i_k} \lambda_{j_1-s}^{(n-s)} + 1,\]
or equivalently,
\[\left(\sum_{s=1}^k x_{j_s-i_s, j_s}-s'\right) - \left(\sum_{s=1}^{k-1} x_{j_{s+1}-i_s,j_{s+1}}-s'+1\right) \geq  \sum_{s=0}^{i_k} \lambda_{j_1-s}^{(n-s)} = \mu_{j_1}^{(n)} + \sum_{s=1}^{i_k} \lambda_{j_1-s}^{(n-s)},\]
which is the inequality $(*)$ for $(x'_{ij}) \in Q'$. This completes the proof.
\end{proof}

\subsection{Demazure operators and parapolytopes}

To prove Theorem~\ref{thm:gtsum}, we will need a formula for the character $\mathfrak s_D$.  The following formula is essentially a particular case of one due to Magyar \cite{magyar}. (See also Reiner-Shimozono \cite{dpeel}.) We first define the \textbf{isobaric divided difference operator} (or \textbf{Demazure operator}) $\pi_i$ acting on polynomials $f(x_1, \dots, x_n)$ by
\[\pi_if(x_1, \dots, x_n) = \partial_i (x_if)=\frac{x_if - x_{i+1}s_if}{x_i-x_{i+1}},\]
where $s_if$ is the polynomial obtained from $f$ by switching $x_i$ and $x_{i+1}$. Note that $\pi_i f = f$ if $f$ is symmetric in $x_i$ and $x_{i+1}$.

\begin{proposition} \label{prop:sd}
	 Let $D$ be a column-convex diagram with $n$ rows with ${\rm{Par}}_D=\{\lambda^{(1)}, \ldots, \lambda^{(n)}\}$. Define $\widetilde D$ to be the diagram with $n-1$ rows such that $\mathrm{Par}_{\widetilde D} = (\widetilde \lambda^{(1)}, \dots, \widetilde \lambda^{(n-1)})$, where $\widetilde \lambda^{(i)}_j = \lambda^{(i+1)}_j - \lambda^{(i+1)}_{i+1}$. (Here, $\widetilde D$ is obtained from $D$ by removing any column with a box in the first row and then shifting all remaining boxes up by one row.) Also let
	 \[\mu = (\lambda^{(1)}_1 + \lambda^{(2)}_2+\cdots + \lambda^{(n)}_n, \lambda^{(2)}_2 + \cdots + \lambda^{(n)}_n, \dots, \lambda^{(n)}_n),\]
	 the partition formed from all columns of $D$ with a box in the first row. Then
	 \[\mathfrak s_{D} = x_1^{\mu_1}\cdots x_n^{\mu_n} \pi_1\pi_2\cdots \pi_{n-1} (\mathfrak s_{\widetilde D}).\]
\end{proposition}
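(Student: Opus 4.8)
The plan is to establish the recursion $\mathfrak s_D = x_1^{\mu_1}\cdots x_n^{\mu_n}\,\pi_1\pi_2\cdots\pi_{n-1}(\mathfrak s_{\widetilde D})$ by reducing to the Magyar/Demazure-operator formula for characters of flagged Schur modules of column-convex (more generally, %-convex or percent-avoiding) diagrams. First I would recall the relevant formula from \cite{magyar}: for a column-convex diagram $D$ with columns $C_1,\dots,C_r$ listed so that (after sorting) the character can be built up column by column, one has
\[
\mathfrak s_D = \pi_{w_1}\bigl(x^{C_1}\,\pi_{w_2}(x^{C_2}\cdots \pi_{w_r}(x^{C_r})\cdots)\bigr),
\]
where $x^{C}$ denotes the monomial $\prod_{i\in C}x_i$ for a column $C$ and each $\pi_{w_t}$ is a Demazure operator indexed by a suitable (Grassmannian-type) permutation determined by the column lengths. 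The key structural input is that for a column-convex diagram the columns are intervals $[a_t,b_t]$, and Magyar's ``orthodontia'' orders them so that the operators appearing are exactly products $\pi_{a_t}\pi_{a_t+1}\cdots$ pushing each column down to start in row~$1$.

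The main step is then a bookkeeping argument separating the columns of $D$ into those whose top box is in row~$1$ and those whose top box is in a row $\geq 2$. Columns of the first type contribute, after all the Demazure operators that act on the outermost level have been applied, precisely the monomial $x_1^{\mu_1}\cdots x_n^{\mu_n}$: indeed $\mu$ is by definition the partition read off from the columns of $D$ meeting row~$1$, and these columns are $[1,b_t]$, so each contributes $x_1x_2\cdots x_{b_t}$, and collecting exponents gives exactly $\mu$. Since $\mu$ is a partition (weakly decreasing), $x^\mu$ is \emph{not} symmetric under any $s_i$ with $i\le n-1$ unless consecutive parts coincide, but crucially $x^\mu \pi_i(g) $ cannot simply be pulled inside; instead one uses the Leibniz-type rule $\pi_i(x^\mu g) = x^{\mu}\pi_i(g)$ precisely when $\mu_i = \mu_{i+1}$, and otherwise handles the discrepancy. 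This is where I expect the real work to be: one must check that the leftmost block of Demazure operators $\pi_1\pi_2\cdots\pi_{n-1}$ factors out correctly, i.e. that applying $\pi_1\cdots\pi_{n-1}$ to the whole nested expression is the same as first applying it to the inner part $\mathfrak s_{\widetilde D}$ and then multiplying by $x^\mu$. The cleanest route is to recognize that $\widetilde D$ is obtained from $D$ by deleting exactly the row-$1$ columns and shifting up, so the nested Magyar expression for $\mathfrak s_D$ is literally $x_1^{\mu_1}\cdots x_n^{\mu_n}$ times the ``shift-up'' of the nested expression for $\mathfrak s_{\widetilde D}$, conjugated by the index shift $i\mapsto i+1$, and that $\pi_1\pi_2\cdots\pi_{n-1}$ is exactly the operator that implements this row shift at the level of characters (it sends $\mathfrak s_{\widetilde D}(x_1,\dots,x_{n-1})$, viewed in variables $x_2,\dots,x_n$, to the $B_n$-character obtained by adding a new bottom... rather, new top row available to all boxes).

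To make the last point rigorous I would verify the shift identity directly on the module: the flagged Schur module $\mathcal S_D$ has a filtration (the Demazure flag / van der Kallen--Magyar result for column-convex diagrams) whose character recursion is exactly the stated one, where the operator $\pi_1\cdots\pi_{n-1}$ corresponds to inducing the $B_{n-1}$-module $\mathcal S_{\widetilde D}$ (in variables $x_1,\dots,x_{n-1}$, reindexed) up along the flag and the prefactor $x^\mu$ records the fixed content forced in the row-$1$ columns. Concretely: any row-flagged filling $T$ of $D$ assigns to each box $(1,j)$ the value $1$; the remaining boxes lie in $\widetilde D$ after the upward shift, and row-flaggedness $T(i,j)\le i$ becomes $T(i-1,j)\le i-1$ \emph{except} that boxes in the old row $2$ gain the option of value $1$ as well as $2$ — this is precisely the effect of one Demazure operator, and iterating across rows $2,\dots,n$ gives $\pi_1\pi_2\cdots\pi_{n-1}$ acting on $\mathfrak s_{\widetilde D}$. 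Summing the contribution $x_1^{\mu_1}\cdots x_n^{\mu_n}$ from the forced row-$1$ entries then yields the formula. The main obstacle, as noted, is the careful justification that these Demazure operators compose in the order $\pi_1\pi_2\cdots\pi_{n-1}$ (not some other order) and that the prefactor commutes out; I would resolve it by appealing to the Demazure character formula / Magyar's theorem for column-convex diagrams rather than re-deriving the flag from scratch.
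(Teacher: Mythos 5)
Your proposal is, at its core, the paper's own proof: both arguments reduce Proposition~\ref{prop:sd} to Magyar's Demazure-operator recursion for (column-convex) diagrams, with the row-$1$ columns of $D$ accounting for $x_1^{\mu_1}\cdots x_n^{\mu_n}$ and the upward row shift relating $\mathfrak s_D$ to $\pi_1\cdots\pi_{n-1}(\mathfrak s_{\widetilde D})$. The only substantive difference is that the paper builds $D$ from $\widetilde D$ by first performing the row swaps $i=n-1,\dots,1$ (each contributing a $\pi_i$ by Magyar's recursion, cf.\ his Proposition 15) and only then adjoining the columns $\{1,\dots,i\}$ (each contributing the factor $x_1\cdots x_i$), so the prefactor $x^{\mu}$ sits outside all the operators by construction and the commutation issue you flag (and the filling-level heuristic you sketch to resolve it) never arises.
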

\begin{proof}
	Note that $D$ can be obtained from $\widetilde D$ by switching the $i$th and $(i+1)$st row for $i = n-1, n-2, \dots, 1$, and then adding $\mu_i$ columns with boxes in rows $\{1, \dots, i\}$ for each $i = 1, \dots, n$. The result then follows immediately from \cite{magyar} (see, for instance, Proposition 15).
\end{proof}

We now show that the polytope for $D$ can be constructed iteratively in a way that mimics the application of the operator $\pi_i$. This geometric operation is the same as the operator $D_i$ given by Kiritchenko in \cite{divdiff} specialized for our current situation.

The key lemma is the following calculation.
\begin{lemma} \label{lem:Di}
Choose nonnegative integers $N_1$, $N_2$ and $\mu_1 \leq \nu_1$, \dots, $\mu_k \leq \nu_k$ such that $\sum_{i=1}^k (\mu_i + \nu_i) \leq N_1+N_2$. Define the polynomial
\[f(x_1, x_2) = \sum_{c_1 = \mu_1}^{\nu_1} \cdots \sum_{c_k = \mu_k}^{\nu_k} x_1^{N_1 - c_1 - \cdots - c_k}x_2^{c_1 + \cdots + c_k-N_2} .\]
Then
\[\pi_1 f(x_1, x_2) = \sum_{c_1 = \mu_1}^{\nu_1} \cdots \sum_{c_k = \mu_k}^{\nu_k} \sum_{c_{k+1} = 0}^{\nu_{k+1}}x_1^{N_1 - c_1 - \cdots - c_k - c_{k+1}}x_2^{c_1 + \cdots + c_k + c_{k+1}-N_2} ,\]
where $\nu_{k+1} = N_1+N_2 - \sum_{i=1}^k (\mu_i + \nu_i)$.
\end{lemma}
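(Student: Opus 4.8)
The plan is to treat this as a direct computation with the Demazure operator $\pi_1$ acting on Laurent monomials in two variables. The essential fact is the following: for any integer $a$,
\[
\pi_1(x_1^a) = x_1^a + x_1^{a-1}x_2 + x_1^{a-2}x_2^2 + \cdots + x_2^a = \sum_{c=0}^{a} x_1^{a-c}x_2^c
\]
when $a \geq 0$, and $\pi_1(x_1^a) = 0$ when $a = -1$, together with the symmetry $\pi_1(x_1^a x_2^b) = x_2^{-b}\pi_1(x_1^{a+b})\cdot(\text{care with the }x_2\text{ factor})$; more precisely, since $\pi_1(gh) = g\,\pi_1(h)$ whenever $g$ is symmetric in $x_1,x_2$, and $x_1 x_2$ is symmetric, one has $\pi_1(x_1^a x_2^b) = (x_1x_2)^b \pi_1(x_1^{a-b})$ for $a \geq b$. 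So I would first record this single-monomial identity (a short induction on $a$, or the geometric-series identity $\pi_1(x_1^a) = \frac{x_1^{a+1}-x_2^{a+1}}{x_1-x_2}$), and then apply $\pi_1$ termwise to $f$.

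Concretely, fix a choice of $(c_1,\dots,c_k)$ with $\mu_i \le c_i \le \nu_i$, and set $c = c_1+\cdots+c_k$. The corresponding term of $f$ is $x_1^{N_1-c}x_2^{c-N_2}$. Applying $\pi_1$ and using the identity above (with $a = N_1 - c$, $b = c - N_2$, noting $a - b = N_1 + N_2 - 2c \geq N_1+N_2 - \sum(\mu_i+\nu_i) = \nu_{k+1} \geq 0$ by the hypothesis $\sum(\mu_i+\nu_i) \le N_1 + N_2$ — wait, I need $2c \le N_1+N_2$, which follows since $c \le \sum \nu_i$ and also I should check $2c \le \sum(\mu_i+\nu_i) \le N_1+N_2$: indeed $2c_i \le \mu_i + \nu_i$ fails in general, so instead I bound $a-b = N_1+N_2-2c$ and observe $c \le \sum_i \nu_i$, giving $a - b \ge N_1+N_2 - 2\sum\nu_i$, which is not obviously $\ge 0$; the correct bound uses $c \le \sum \nu_i$ and separately that we only need $a \ge 0$ and $b$ can be negative, so I should instead write $\pi_1(x_1^{N_1-c}x_2^{c-N_2})$ directly via the geometric series when $c - N_2 \le 0$, and handle $c - N_2 > 0$ by factoring out $(x_1x_2)^{c-N_2}$), one gets
\[
\pi_1\!\left(x_1^{N_1-c}x_2^{c-N_2}\right) = \sum_{c_{k+1}=0}^{\,N_1-c-(c-N_2)} x_1^{N_1-c-c_{k+1}}x_2^{c+c_{k+1}-N_2}.
\]
The upper limit is $N_1 + N_2 - 2c$. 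Summing over all $(c_1,\dots,c_k)$ then almost gives the claimed right-hand side, except that the claimed upper limit for $c_{k+1}$ is the \emph{constant} $\nu_{k+1} = N_1+N_2-\sum_i(\mu_i+\nu_i)$ rather than the variable bound $N_1+N_2-2c$.

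The main obstacle — and really the only substantive point — is reconciling these two upper limits for the $c_{k+1}$ sum. The two expressions agree only when every $c_i$ is simultaneously at its maximum $\nu_i$ and its minimum $\mu_i$, i.e. never, so the statement as literally written must be interpreted with the convention that a sum $\sum_{c_{k+1}=0}^{M}$ is empty (contributes nothing) when $M < 0$, and that the extra terms $N_1+N_2-2c > \nu_{k+1}$ cannot occur. I would resolve this by re-examining the intended reading: the polynomial $f$ on the left is symmetric under simultaneously replacing each $c_i \mapsto \mu_i + \nu_i - c_i$ (this is the reflection that makes $\pi_1 f = f$-adjacent terms pair up), and the right-hand side should be read as the set of monomials $x_1^{N_1 - d}x_2^{d - N_2}$ where $d$ ranges appropriately — the cleanest fix is to observe that applying $\pi_1$ and then collecting, the multiset of exponents produced is exactly $\{(N_1 - d, d - N_2) : d = c_1 + \cdots + c_{k+1}, \ \mu_i \le c_i \le \nu_i \ (i \le k), \ 0 \le c_{k+1} \le \nu_{k+1}\}$, because the reflection symmetry of the first $k$ coordinates means the "missing" high values of $c_{k+1}$ in the variable-bound version are supplied by the "missing" low values, and vice versa. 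I would make this precise by a change of summation variable: after applying $\pi_1$ termwise, substitute, in the term indexed by $(c_1,\dots,c_k)$, the new variable $c_{k+1}' = c_{k+1}$ when we also allow reflecting $(c_1,\dots,c_k)$, and check that the resulting index set is precisely the product $\prod_{i\le k}[\mu_i,\nu_i] \times [0,\nu_{k+1}]$ with each monomial appearing with the right multiplicity. This symmetry/bookkeeping step is where all the work lies; everything else is the one-line geometric-series identity for $\pi_1$ applied to a monomial, together with linearity.
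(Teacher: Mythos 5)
Your termwise strategy runs into a real problem that your write-up acknowledges but does not resolve, and the patch you propose is incorrect as stated. When you apply $\pi_1$ to a single monomial $x_1^{N_1-c}x_2^{c-N_2}$ (with $c=c_1+\cdots+c_k$), the result equals $\sum_{j=0}^{N_1+N_2-2c} x_1^{N_1-c-j}x_2^{c-N_2+j}$ only when $N_1+N_2-2c\ge 0$; when $N_1+N_2-2c\le -2$ it is not an empty sum but $-\sum_{j=1}^{2c-N_1-N_2-1} x_1^{N_1-c+j}x_2^{c-N_2-j}$, i.e.\ genuinely negative terms, and such tuples do occur under the hypotheses (take $k=1$, $\mu_1=0$, $\nu_1=N_1+N_2\ge 2$, $c_1=N_1+N_2$). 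So the convention ``the sum contributes nothing when the upper limit is negative'' is false (it is correct only when the limit is exactly $-1$), and the hope that terms with $N_1+N_2-2c>\nu_{k+1}$ ``cannot occur'' is also false: they occur whenever $2c<\sum_i(\mu_i+\nu_i)$. The entire content of the lemma is precisely the assertion that, after summing over all tuples, these overshoots and negative terms cancel and the surviving monomials reindex as the product range $\prod_{i\le k}[\mu_i,\nu_i]\times[0,\nu_{k+1}]$ with the right multiplicities. You correctly identify the reflection $c_i\mapsto\mu_i+\nu_i-c_i$ as the mechanism, but you explicitly defer the verification (``this is where all the work lies''), so the proof is not complete.

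The gap closes immediately if you use your reflection observation on $f$ as a whole instead of monomial by monomial, which is what the paper does: reversing every summation index shows $f=(x_1/x_2)^{\nu_{k+1}}\,s_1f$, hence directly from the definition $\pi_1 f=\frac{x_1f-x_2\,s_1f}{x_1-x_2}=f\cdot\frac{1-(x_2/x_1)^{\nu_{k+1}+1}}{1-x_2/x_1}=f\cdot\sum_{c_{k+1}=0}^{\nu_{k+1}}(x_2/x_1)^{c_{k+1}}$, which is the stated right-hand side. No termwise application of $\pi_1$, and hence no cancellation bookkeeping, is needed.
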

\begin{proof}
Note that reversing the order of each of the summations in the expression for $f$ gives
\[f = \sum_{c_1 = \mu_1}^{\nu_1} \cdots \sum_{c_k = \mu_k}^{\nu_k} x_1^{\nu_{k+1}-N_2+c_1 + \cdots + c_k}x_2^{N_1-\nu_{k+1}-c_1-\cdots - c_k}  = (\tfrac{x_1}{x_2})^{\nu_{k+1}} \cdot s_1f.\]
Hence
\[\pi_1f = \frac{x_1f - x_2s_1f}{x_1-x_2} = f \cdot \frac{1 - \left(\tfrac{x_2}{x_1}\right)^{\nu_{k+1}+1}}{1-\tfrac{x_2}{x_1}} = f \cdot \sum_{c_{k+1}=0}^{\nu_{k+1}} x_1^{-c_{k+1}}x_2^{c_{k+1}},\]
as desired.
\end{proof}

Consider $\mathbb R^{\binom{n+1}{2}}$ with coordinates $x_{ij}$ for $1 \leq i \leq j \leq n$. Let $\varphi_k \colon \mathbb R^{\binom{n+1}{2}} \to \mathbb R^{\binom{n+1}{2} - n-1+k}$ be the projection onto the coordinates $x_{ij}$ for all $i \neq k$.

\begin{definition}[\cite{divdiff}]\label{def:parapolytope}
A \textbf{parapolytope} $P \subset \mathbb R^{\binom{n+1}{2}}$ is a convex polytope such that, for all $k$, every fiber of the projection $\varphi_k$ on $P$ is a coordinate parallelepiped.

In other words, for every $k$ and every set of constants $c_{ij}$ ($i \neq k$), there exist constants $\mu_j$ and $\nu_j$ (depending on the $c_{ij}$) such that $(x_{ij}) \in P$ with $x_{ij} = c_{ij}$ for $i \neq k$ if and only if $\mu_j \leq x_{kj} \leq \nu_j$.

We denote this parallelepiped (which depends on $k$ and $c_{ij}$ for $i \neq k$) by \[\Pi(\mu_k, \dots, \mu_n; \nu_k, \dots, \nu_n)=\Pi(\mu,\nu) = \{(x_{kj})_{j=k}^n \mid \mu_j \leq x_{kj} \leq \nu_j\} \subset \mathbb R^{n+1-k}.\]
\end{definition}

Given a polytope $P \subset \mathbb R^{\binom{n+1}{2}}$, let $\sigma_P$ be its {\bf integer point transform}
\[\sigma_P(x_{ij}) = \sum_{(c_{ij}) \in P \cap \mathbb Z^{\binom{n+1}{2}}} \prod_{1 \leq i \leq j \leq n} x_{ij}^{c_{ij}},\]
and define $s_P(x_i)$ to be the image of $\sigma_P(x_{ij})$ under the specialization sending
\[
x_{ij}\mapsto
\begin{cases}
x_1&\text{when }\,i=1,\\
x_{i-1}^{-1} x_{i}&\text{when }\,i>1.
\end{cases}
\]
In other words, the point $(c_{ij}) \in P \cap \mathbb Z^{\binom{n+1}{2}}$ corresponds to the monomial in which the exponent of $x_i$ is $C_i - C_{i+1}$, where $C_i = \sum_{j=i}^n c_{ij}$.

\begin{lemma}\label{lem:Di2}
	Fix $2 \leq k \leq n$, and let $P, Q \subset \mathbb R^{\binom{n+1}{2}}$ be parapolytopes. Suppose that for any fixed integer point $c = (c_{ij})_{i \neq k}$, the fiber over $c$ of the projection $\varphi_k$ on $P$ is the (integer) parallelepiped
	\[\Pi_P = \Pi(\mu_k, \dots, \mu_{n-1}, 0; \nu_k, \dots, \nu_{n-1}, 0),\]
	while the fiber over $c$ of $\varphi_k$ on $Q$ is
	\[\Pi_Q =\Pi(\mu_k, \dots, \mu_{n-1}, \mu_n; \nu_k, \dots, \nu_{n-1}, \nu_n),\]
	where $\mu_n = 0$ and  \[\nu_n = \sum_{j=k-1}^n c_{k-1,j} + \sum_{j=k+1}^n c_{k+1,j} - \sum_{j=k}^{n-1} (\mu_j+\nu_j) \geq 0.\]
	Then $s_Q = \pi_{k-1} s_P$.
\end{lemma}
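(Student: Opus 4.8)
The plan is to compare the integer point transforms of $P$ and $Q$ fiber-by-fiber over the projection $\varphi_k$, and to recognize that, after the specialization defining $s_P$ and $s_Q$, the passage from one fiber sum to the other is exactly the one-variable identity of Lemma~\ref{lem:Di}.

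First I would decompose: an integer point of $P$ (resp.\ $Q$) is an integer point $c=(c_{ij})_{i\neq k}$ in the common image $\varphi_k(P)=\varphi_k(Q)$ together with an integer point of the fiber $\Pi_P$ (resp.\ $\Pi_Q$) over $c$; the two images coincide because, given $\mu_n=0\le \nu_n$, the parallelepiped $\Pi_P$ is nonempty iff $\mu_j\le\nu_j$ for $k\le j<n$ iff $\Pi_Q$ is nonempty. Write $C_i=\sum_{j=i}^n c_{ij}$ for the row sums, with $C_{n+1}=0$, so that the specialization sends an integer point $(c_{ij})$ to the Laurent monomial $\prod_{i=1}^n x_i^{C_i-C_{i+1}}$. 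The key observation is that the row-$k$ coordinates of a point affect this monomial only through the single number $C_k=\sum_{j=k}^n c_{kj}$, and $C_k$ appears only in the exponents of $x_{k-1}$ and $x_k$ (this is where $k\ge 2$ is used). Hence, for fixed $c$, the contribution of the fiber over $c$ to $s_P$ factors as $M(c)\cdot f_c(x_{k-1},x_k)$, where $M(c)=\prod_{i\neq k-1,k} x_i^{C_i-C_{i+1}}$ is a Laurent monomial supported on the variables other than $x_{k-1},x_k$ and independent of the fiber, and $f_c$ is the sum of $x_{k-1}^{C_{k-1}-C_k}x_k^{C_k-C_{k+1}}$ over the integer points of $\Pi_P$. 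Setting $N_1=C_{k-1}$ and $N_2=C_{k+1}$, the polynomial $f_c$ is literally the polynomial $f$ of Lemma~\ref{lem:Di} in the variables $(x_1,x_2)=(x_{k-1},x_k)$, its $n-k$ summation indices ranging over $[\mu_k,\nu_k],\dots,[\mu_{n-1},\nu_{n-1}]$ (the coordinate $x_{kn}$ is pinned to $0$ in $\Pi_P$).

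Next I would verify that the numerical hypothesis $\sum(\mu_i+\nu_i)\le N_1+N_2$ of Lemma~\ref{lem:Di} is exactly the hypothesis $\nu_n\ge 0$ of the present lemma, and that the quantity ``$\nu_{k+1}$'' produced there equals $\nu_n$. Applying Lemma~\ref{lem:Di} then yields $\pi_{k-1}f_c=\sum x_{k-1}^{C_{k-1}-C_k}x_k^{C_k-C_{k+1}}$, where the sum now ranges over the integer points of $\Pi_Q$ (the newly introduced index running over $[0,\nu_n]$, which is exactly the range of $x_{kn}$ in $\Pi_Q$). Since $\pi_{k-1}$ is linear and acts on the variables $x_{k-1},x_k$ only, it passes through the factor $M(c)$, so $\pi_{k-1}(M(c)f_c)=M(c)\,\pi_{k-1}f_c=\sum_{\Pi_Q}\prod_{i=1}^n x_i^{C_i-C_{i+1}}$, which is precisely the contribution of the fiber over $c$ to $s_Q$. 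Summing over all $c$ gives $\pi_{k-1}s_P=s_Q$.

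The only real obstacle is the index bookkeeping: one must carefully match the generic data $(x_1,x_2;N_1,N_2;\mu_\bullet,\nu_\bullet)$ of Lemma~\ref{lem:Di} with the concrete data $(x_{k-1},x_k;C_{k-1},C_{k+1};\mu_\bullet,\nu_\bullet)$ here, confirm that the extra summation variable contributed by $\pi_{k-1}$ has precisely the range $[0,\nu_n]$ prescribed for $\Pi_Q$, and note that Lemma~\ref{lem:Di} remains valid for Laurent (not merely polynomial) exponents, as is needed since the specialization of $\sigma_P$ naturally produces Laurent monomials. Everything else --- linearity of $\pi_{k-1}$, the factorization through the row sum $C_k$, and the term-by-term comparison --- is formal.
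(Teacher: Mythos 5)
Your proposal is correct and follows essentially the same route as the paper: decompose $s_P$ and $s_Q$ fiber-by-fiber over $\varphi_k$, observe that the row-$k$ coordinates enter the specialization only through $C_k$ so each fiber contributes $M(c)$ times a sum of the exact form in Lemma~\ref{lem:Di} with $N_1=C_{k-1}$, $N_2=C_{k+1}$, and apply $\pi_{k-1}$ term by term, noting that the new index range $[0,\nu_n]$ matches $\Pi_Q$. Your additional bookkeeping (matching $\nu_{k+1}$ with $\nu_n$, the remark that Lemma~\ref{lem:Di} works with Laurent exponents) just spells out details the paper leaves implicit.
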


\begin{proof}
	For fixed $c$, the contribution to $s_P$ of the fiber over $c$ has the form \[M \cdot \sum_{(c_{kk}, \dots, c_{k,n-1}) \in \Pi_P} x_{k-1}^{C_{k-1} - \sum_j c_{kj}} x_k^{\sum_j c_{kj} - C_{k+1}},\]
	where $M$ is a monomial that does not contain $x_{k-1}$ nor $x_k$, and $C_i = \sum_{j=i}^n c_{ij}$ only depends on $c$ for $i \neq k$. This summation has the same form as the one in Lemma~\ref{lem:Di}, so applying $\pi_{k-1}$ as per the lemma immediately gives the result.
\end{proof}

\begin{remark} \label{rem:para}
	The operator that produces $Q$ from $P$ is denoted by $D_{k-1}$ in \cite{divdiff}. However, it is important to note that the operator $D_{k-1}$ will not in general yield a parapolytope or even necessarily a polytope from a general parapolytope $P$.
\end{remark}

We are now ready to prove our main theorem.

\begin{proof} [Proof of Theorem~\ref{thm:gtsum}]
	Let $D$ be a column-convex diagram with $n$ rows with $\mathrm{Par}_D = \{\lambda^{(1)}, \dots, \lambda^{(n)}\}$, and let
	\[\p_D = \GT(\lambda^{(1)}) + \cdots + \GT(\lambda^{(n)}).\] We first claim that we can reduce to the case when $D$ does not contain any boxes in the first row. Indeed, adding a column with boxes in rows $1, 2, \dots,k$ to $D$ serves to add $1$ to each part of $\lambda^{(k)}$, which, by Lemma \ref{lem:gtsum}, translates $\GT(\lambda^{(k)})$ by the single point $\GT(1^k)$ and hence does the same to $\p_D$. This translation adds $k+1-i$ to the sum of row $i$ for $i=1, \dots, k$, so it multiplies $s_{\p_D}$ by $x_1x_2 \cdots x_k$. Since Proposition~\ref{prop:sd} shows that adding this column also multiplies $\mathfrak s_D$ by $x_1x_2 \cdots x_k$, the claim follows.
	
	Therefore, we may assume that $D$ has no boxes in the first row, so that $\lambda^{(k)}_k = 0$ for all $k$, which implies that $\p_D$ is contained in the hyperplane $x_{1n} = 0$. Denote by $\p_D^{(m)}$ the intersection of $\p_D$ with the subspace $x_{1n} = x_{2n} = \cdots = x_{mn} = 0$. In fact, $\p_D^{(m)}$ is also the orthogonal projection of $\p_D$ onto this subspace. To see this, note that for any Gelfand-Tsetlin pattern $(y_{ij})_{1 \leq i \leq j \leq k} \in \GT(\lambda^{(k)})$, setting $y_{in} = 0$ for any $i \leq m$ again yields a valid Gelfand-Tsetlin pattern. Thus for any $(x_{ij}) \in \p_D$, setting $x_{in} = 0$ for all $i \leq m$ will again yield an element of $\p_D$.
	
	Note also that $\p_D^{(n)}$ is just a translate of $\F_{\widetilde D}$ where $\widetilde D$ is the diagram obtained from $D$ by shifting each box up by one row. (Any Gelfand-Tsetlin pattern for $\lambda^{(k)}$ in which the last entry in each row is $0$ is just a Gelfand-Testlin pattern for $\lambda^{(k)}$, thought of as a partition of length $k-1$.) It follows that $\mathfrak s_{\widetilde D} = s_{\F_{\widetilde D}} = s_{\p_D^{(n)}}$.
	
	We first show that the inequalities defining $\p_D^{(m)}$ are precisely the inequalities for $\p_D$ described in Proposition~\ref{prop:gtsum} that do not involve any $x_{in}$ for $i \leq m$ (together with $x_{1n} = x_{2n} = \cdots = x_{mn} = 0$). Clearly any inequality of the form $x_{i-1,n-1} \geq x_{in}$ for $i \leq m$ is redundant since it is implied by inequality $(*)$ for $i_1 = n-i$ and $j_1 = n-1$. Then consider any inequality $(*)$ for a sequence $I$ with $j_k = n$ and $i_{k-1} \geq n-m$:
	\[\sum_{s=1}^{k-1} x_{j_s - i_s, j_s} - \sum_{s=1}^{k-2} x_{j_{s+1}-i_s, j_{s+1}}  + x_{n-i_k, n} - x_{n-i_{k-1}, n}\geq \sum_{s=0}^{i_k} \lambda_{j_1-s}^{(n-s)}.\]
	Let $I'$ be the sequence obtained from $I$ by removing $i_k$ and $j_k$. The corresponding inequality is
	\[\sum_{s=1}^{k-1} x_{j_s - i_s, j_s} - \sum_{s=1}^{k-2} x_{j_{s+1}-i_s, j_{s+1}} \geq \sum_{s=0}^{i_{k-1}} \lambda_{j_1-s}^{(n-s)}.\]
	Since $x_{n-i_k,n} \geq 0$, $x_{n-i_{k-1},n} = 0$, and $i_{k-1} > i_k$, we see that the inequality for $I$ follows immediately from that for $I'$.
	
	Since none of the inequalities defining $\p_D^{(m)}$ involve two coordinates in the same row, $\p_D^{(m)}$ is a parapolytope. It therefore suffices to show that $\p_D^{(m)}$ and $\p_D^{(m-1)}$ are related as in Lemma~\ref{lem:Di2}, for it will then follow that $s_{\p_D^{(m-1)}} = \pi_{m-1}s_{\p_D^{(m)}}$, which combined with $\mathfrak s_{\widetilde D} = s_{\p_D^{(n)}}$ and $s_{\p_D} = s_{\p_D^{(1)}} $ will imply that $s_{\p_D} = \pi_1\pi_2 \cdots \pi_{n-1}(\mathfrak s_{\widetilde D}) = \mathfrak s_D$ by Proposition~\ref{prop:sd}, as desired.
	
	Therefore, fix $c_{ij}$ for $i \neq m$, with $c_{in} = 0$ for $i < m$, and define $\mu_m, \dots, \mu_{n}, \nu_m, \dots, \nu_{n}$ as in Definition~\ref{def:parapolytope} for $\p_D^{(m-1)}$. We claim that $\nu_j + \mu_{j-1} = c_{m-1,j-1} + c_{m+1,j}$. It will then follow by summing over all $j$ that \[\nu_n = \sum_{j=m-1}^{n-1} c_{m-1,j} + \sum_{j=m+1}^{n} c_{m+1,j} - \sum_{j = m}^{n-1}(\mu_j + \nu_j).\]
	Together with noting that the only lower bound on $x_{mn}$ is $0$, this will complete the proof by Lemma~\ref{lem:Di2}.
		
	Consider the upper bounds on $x_{mj}$ in $\p_D^{(m-1)}$. We need to show that if $x_{mj} \leq C$ (where $C$ is some function of $c_{ij}$ for $i \neq m$), then $x_{m,j-1} \geq c_{m-1,j-1} + c_{m+1,j} - C$. This is immediate for the inequality $x_{mj} \leq c_{m-1,j-1}$ since $x_{m,j-1} \geq c_{m+1, j}$. Then consider a sequence $I$ such that $j_{s'+1}-i_{s'} = m$ and $j_{s'+1} = j$ for some $s'$, so that $-x_{mj}$ appears on the left side of $(*)$. Thus $C-x_{mj} \geq 0$, where
	\[C = \sum_{s=1}^k c_{j_s-i_s,j_s} - \sum_{\substack{1 \leq s \leq k-1\\s \neq s'}} c_{j_{s+1-i_s,j_{s+1}}} - \sum_{s=0}^{i_k} \lambda_{j_1-s}^{(n-s)}.\]
	 By inserting $j_{s'+1}-1 = j-1$ before $j_{s'+1} = j$ and $i_{s'}-1 = j-m-1$ before $i_{s'} = j-m$ in $I$ to get a new sequence $I'$, the left side of $(*)$ for $I'$ differs from the left side of $(*)$ for $I$ by $x_{m,j-1} + x_{m,j} - c_{m-1,j-1} - c_{m+1,j}$. Therefore the inequality $(*)$ for $I'$ is equivalent to
	 \[C+x_{m,j-1} - c_{m-1,j-1} - c_{m+1,j} \geq 0,\]
	 or $x_{m,j-1} \geq c_{m-1,j-1} + c_{m+1,j} - C$, as desired. A similar argument shows that any lower bound $x_{m,j-1} \geq C'$ yields an upper bound $x_{mj} \leq c_{m-1,j-1} + c_{m+1,j} - C'$, which completes the proof.
\end{proof}

\begin{example} \label{ex:3rows2}
	Let $n=3$, and let $D$ be the column-convex diagram shown below with  $\lambda^{(3)} = (a+b,a,0)$, $\lambda^{(2)} = (c,0)$, and $\lambda^{(1)} = (0)$.
	\[
	\begin{tikzpicture}[scale=.5]
	\node at (0,2) {$1$};
	\node at (0,1) {$2$};
	\node at (0,0) {$3$};
	\draw (1,-.5) rectangle (2,.5) rectangle (1,1.5);
	\node at (3,0){$\cdots$};
	\node at (3,1){$\cdots$};
	\draw (4,-.5) rectangle (5,.5) rectangle (4,1.5);
	\draw (2,-.5) rectangle (4,.5) rectangle (2,1.5);
	\draw[decorate,decoration={brace,amplitude=5pt,raise=1ex}] (4.9,-.5)--(1.1,-.5) node[midway,yshift=-2em]{$a$};
	\draw (5,-.5) rectangle (9,.5) (6,-.5)--(6,.5) (8,-.5)--(8,.5);
	\node at (7,0){$\cdots$};
	\draw[decorate,decoration={brace,amplitude=5pt,raise=1ex}] (8.9,-.5)--(5.1,-.5) node[midway,yshift=-2em]{$b$};
	\draw (9,.5) rectangle (13,1.5) (10,.5)--(10,1.5) (12,.5)--(12,1.5);
	\node at (11,1){$\cdots$};
	\draw[decorate,decoration={brace,amplitude=5pt,raise=1ex}] (12.9,-.5)--(9.1,-.5) node[midway,yshift=-2em]{$c$};
	\end{tikzpicture}
	\]
	Using the notation in the proof of Theorem~\ref{thm:gtsum}, all the polytopes $\p_D^{(m)}$ for $m=1,2,3$ have $x_{11} = a+b$, $x_{12} = a+c$, and $x_{13} = 0$.
	\begin{itemize}
		\item For $m=3$, $\p_D^{(3)}$ is a segment since we have $a \leq x_{22} \leq a+b$.
		\item For $m=2$, the fiber of $\p_D^{(2)}$ above a point of $\p_D^{(3)}$ is defined by $0 \leq x_{33} \leq x_{22}$, making $\p_D^{(2)}$ a trapezoid. Note that for fixed $x_{33}$, the condition on $x_{22}$ is that $\max\{a, x_{33}\} \leq x_{22} \leq a+b$.
		\item For $m=1$, the fiber of $\p_D = \p_D^{(1)}$ above a point of $\p_D^{(2)}$ is defined by
		\begin{align*}
		0 \leq x_{23} &\leq x_{11} + x_{12} + x_{13} + x_{33} - (\mu_2 + \nu_2)\\
		&= (a+b)+(a+c) +0 + x_{33} - (\max\{a, x_{33}\} + a+b)\\
		&= c+ \min\{a, x_{33}\}.
		\end{align*}
		This is equivalent to the inequalities on $x_{23}$ given in Example~\ref{ex:3rows}:
		\begin{align*}
		\lambda^{(2)}_2+\lambda^{(3)}_3 = 0 \leq x_{23} &\leq c+a = x_{12},\\
		x_{23} &\leq c+x_{33} = x_{12}- \lambda_2^{(3)} + x_{33}.
		\end{align*}
	\end{itemize}
	See Figure~\ref{fig:3rows} for a depiction of $\p^{(m)}_D$ for $m=3,2,1$.
	\begin{figure}
		\begin{tikzpicture}[z = {(250:.6)}]
			\filldraw[fill=blue!10] (0,0,2)--(2,0,2)--(4,0,4)--(0,0,4);
			\node at (1.5,0,3){$\p_D^{(2)}$};
			\node (1) at (-1.5,0,2){$\p_D^{(3)}$};
			\draw[->,>=stealth] (1)--(-.1,0,2.7);
			\draw[ultra thick, red] (0,0,2)--(0,0,4);
			\draw (0,2,2)--(2,4,2)--(2,4,4)--(0,2,4)--cycle (2,4,2)--(4,4,4)--(2,4,4);
			\draw[gray!50] (0,0,2)--(0,2,2) (2,0,2)--(2,4,2);
			\draw  (0,0,4)--(0,2,4) (4,0,4)--(4,4,4);
			\node at (1.5,2,3){$\p_D^{(1)}$};
		\end{tikzpicture}
		\caption{$\p_D=\p^{(1)}_D$ with faces $\p^{(2)}_D$ and $\p^{(3)}_D$ as in Example~\ref{ex:3rows2}. (See also Example~\ref{ex:3rows}.)}
		\label{fig:3rows}
	\end{figure}
\end{example}

\begin{remark}
	The results of Magyar \cite{magyar} allow one to compute the character of the flagged Schur module for any diagram whose columns form a so-called \emph{strongly separated family} (or equivalently, for any \emph{percentage-avoiding diagram} \cite{dpeel}), which includes all Rothe diagrams of permutations. The technique above can be used to find suitable polytopes for a somewhat more general class of diagrams and permutations as Minkowski sums of faces of Gelfand-Tsetlin polytopes (such as the intermediate steps $\p_D^{(m)}$ in the proof of Theorem~\ref{thm:gtsum}), but it does not apply in full generality to all Schubert polynomials due to the ill behavior of general parapolytopes (see Remark~\ref{rem:para}).
\end{remark}

\section{Gelfand-Tsetlin polytopes as flow polytopes}
\label{sec:gtflowpolytope}

In this section we show that the Gelfand-Tsetlin polytope is integrally equivalent to  a flow polytope and give alternative proofs of several known results using flow polytopes. We start by defining flow polytopes and providing the necessary background on them. 

\subsection{Background on flow polytopes}

Let $G$ be a loopless directed acyclic connected (multi-)graph on the vertex set $[n+1]$ with $m$ edges. An integer vector $a=(a_1,\ldots,a_n,-\sum_{i=1}^na_i)\in \mathbb{Z}^{n+1}$ is called a \textbf{netflow vector}. A pair $(G,a)$ will be referred to as a \textbf{flow network}. To minimize notational complexity, we will typically omit the netflow $a$ when referring to a flow network $G$, describing it only when defining $G$. When not explicitly stated, we will always assume vertices of $G$ are labeled so that $(i,j)\in E(G)$ implies $i<j$.

To each edge $(i,j)$ of $G$, associate the type $A$ positive root $e_i-e_j\in\mathbb{R}^n$. Let $M_G$ be the incidence matrix of $G$, the matrix whose columns are the multiset of vectors $e_i-e_j$ for $(i,j)\in E(G)$. A \textbf{flow} on a flow network $G$ with netflow $a$ is a vector $f=(f(e))_{e\in E(G)}$ in $\mathbb{R}_{\geq 0}^{E(G)}$ such that $M_Gf=a$. Equivalently, for all $1\leq i \leq n$, we have 
\[\sum_{e=(k,i)\in E(G)}f(e)+a_i = \sum_{e=(i,k)\in E(G)} f(e). \]
The fact that the netflow of vertex $n+1$ is $-\sum_{i=1}^n a_i$ is implied by these equations.

Define the \textbf{flow polytope} $\mathcal{F}_G(a)$ of a graph $G$ with netflow $a$ to be the set of all flows on $G$:
\[\mathcal{F}_G=\mathcal{F}_G(a)=\{f\in\mathbb{R}^{E(G)}_{\geq 0} \mid M_Gf=a \}. \]
 
\begin{remark}
	\label{rem:flownetwork}
	When $G$ is a flow network $(G,a)$, we will write $\mathcal{F}_G$ for $\mathcal{F}_G(a)$. 
\end{remark}
 
\subsection{The Gelfand-Tsetlin polytope as a flow polytope} 

\begin{namedtheorem}[\ref{thm:gtflowpolytope}]
	$\mathrm{GT}(\lambda)$ is integrally equivalent to $\mathcal{F}_{G_\lambda}$.
\end{namedtheorem}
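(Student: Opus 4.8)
The plan is to exhibit an explicit flow network $G_\lambda$ on the vertex set $[n+1]$ (or a slightly larger vertex set) together with a netflow vector, and then construct a bijective affine-linear map between the Gelfand-Tsetlin patterns of shape $\lambda$ and the flows on $G_\lambda$ that sends lattice points to lattice points in both directions — that is, an integral equivalence. The natural idea is to encode the $k$-th row $(x_{k,k}, x_{k,k+1}, \dots, x_{k,n})$ of the GT pattern, or rather its consecutive differences, as flows along a "layer" of edges, with vertices of $G_\lambda$ corresponding to the slots of the triangular array and edges encoding the interlacing inequalities $x_{i-1,j-1} \ge x_{ij} \ge x_{i-1,j}$. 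Concretely, the quantities $x_{i-1,j-1} - x_{ij} \ge 0$ and $x_{ij} - x_{i-1,j} \ge 0$ become nonnegative flow values on two edges meeting at a vertex indexed by $(i,j)$, and the boundary condition $x_{in} = \lambda_i$ is absorbed into the netflow at the appropriate source/sink vertices. One should set up $G_\lambda$ so that the conservation-of-flow equation at each internal vertex is exactly the identity relating the four entries $x_{i-1,j-1}, x_{i-1,j}, x_{ij}$, and an adjacent differenced entry, which makes the correspondence transparent.

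The key steps, in order: (1) define $G_\lambda$ precisely — specify its vertices, its edges (including which edges carry the "fixed" flow determined by $\lambda$ and which are free), and the netflow vector $a = a(\lambda)$, probably with netflows $\lambda_i - \lambda_{i+1}$ at a family of source vertices and $0$ elsewhere except the global sink; (2) write down the linear map $\Phi$ from $\mathbb{R}^{\binom{n+1}{2}}$ (GT coordinates) to $\mathbb{R}^{E(G_\lambda)}$ sending a GT pattern to the vector of consecutive differences / edge flows, and check that it carries the GT inequalities and boundary equalities exactly onto the flow constraints $f \ge 0$, $M_{G_\lambda} f = a$; (3) check that $\Phi$ is a bijection onto $\mathcal{F}_{G_\lambda}$ with affine-linear inverse given by partial sums, and that both $\Phi$ and $\Phi^{-1}$ are defined over $\mathbb{Z}$ (unimodularity), which is what "integrally equivalent" requires; (4) optionally remark that under $\Phi$, lattice points of $\GT(\lambda)$ correspond to integer flows, recovering the well-known count via the Lindström–Gessel–Viennot / flow interpretation. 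Step (2) — verifying that the flow-conservation equations at every vertex match the interlacing relations and that no constraints are lost or spuriously added — is the technical heart, and I expect the main obstacle to be choosing the graph $G_\lambda$ so that this matching is clean rather than merely correct up to a change of variables; getting the "fixed" edges (those forced by $\lambda$) and the netflow bookkeeping right at the boundary of the triangle is the fiddly part.

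A secondary point worth building in is compatibility with Lemma~\ref{lem:gtsum}: since $\GT(\lambda) = \sum_{k=1}^n (\lambda_k - \lambda_{k+1}) \GT(1^k 0^{n-k})$, it would be natural to define $G_\lambda$ so that this Minkowski decomposition is visible as a decomposition of the netflow vector $a(\lambda) = \sum_k (\lambda_k - \lambda_{k+1}) a(1^k 0^{n-k})$ over a fixed underlying graph $G$ independent of $\lambda$; then Lemma~\ref{lem:gtsum} itself can be re-derived from the flow picture, and the "how to view $\p_D$ in the context of flow polytopes" discussion promised in the introduction becomes immediate, since $\p_D$ is a Minkowski sum of the $\GT(\lambda^{(i)})$ and hence corresponds to a sum of netflow vectors on a common graph. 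I would present the graph $G$ first in this $\lambda$-independent form, prove the integral equivalence for the building blocks $\GT(1^k 0^{n-k})$, and then extend by Minkowski-additivity, which keeps the vertex-by-vertex flow-conservation check as small as possible.
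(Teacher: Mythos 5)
Your proposal is correct and follows essentially the same route as the paper: the paper's $G_\lambda$ is exactly the graph you describe, the integral equivalence is the consecutive-difference map $a_{ij}=x_{i-1,j-1}-x_{ij}$, $b_{ij}=x_{ij}-x_{i-1,j}$ with netflows $\lambda_{j-1}-\lambda_j$ at the top row of vertices, and the inverse is the partial-sum map you anticipate, so nothing essential is missing. One caution on your optional reorganization: deducing the general case from the building blocks $\GT(1^k0^{n-k})$ via Minkowski additivity would presuppose Lemma~\ref{lem:gtsum}, which the paper instead \emph{derives} from this theorem, so that route would need an independent proof of the decomposition to avoid circularity, whereas the direct difference/partial-sum argument needs no such input.
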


\medskip

Recall that given a partition $\lambda = (\lambda_1,\dots,\lambda_n)\in \mathbb{Z}^n_{\geq 0}$, the \textbf{Gelfand-Tsetlin polytope} $\mathrm{GT}(\lambda)$ is the set of all nonnegative triangular arrays
\begin{center}
	\begin{tabular}{ccccccc}
		$x_{11}$&&$x_{12}$&&$\cdots$&&$x_{1n}$\\
		&$x_{22}$&&$x_{23}$&$\cdots$&$x_{2n}$&\\
		&&$\cdots$&&$\cdots$&&\\
		&&$x_{n-1,n-1}$&&$x_{n-1,n}$&&\\
		&&&$x_{nn}$&&&
	\end{tabular}
\end{center}
such that 
\begin{align*}
x_{in}=\lambda_i &\mbox{ for all } 1\leq i\leq n\\
x_{i-1,j-1}\geq x_{ij}\geq x_{i-1,j} &\mbox{ for all } 1\leq i \leq j\leq n.
\end{align*}

Recall also that two integral polytopes $\mathcal{P}$ in $\mathbb{R}^d$ and $\mathcal{Q}$ in $\mathbb{R}^m$ are \textbf{integrally equivalent}
if there is an affine transformation
$\varphi\colon\mathbb{R}^d \to \mathbb{R}^m$ whose restriction to
$\mathcal{P}$ is a bijection $\varphi\colon \mathcal{P} \to \mathcal{Q}$
that preserves the lattice, i.e., $\varphi$ is a
bijection between $\mathbb{Z}^d \cap \aff(\mathcal{P})$ and
$\mathbb{Z}^m \cap \aff(\mathcal{Q})$, where $\aff(\cdot)$ denotes affine span. The map $\varphi$ is called an \textbf{integral equivalence}. Note that integrally equivalent polytopes have the same Ehrhart polynomials, and therefore the same volume.

We now define the flow network $G_\lambda$, describing the graph and its associated netflow (see Remark \ref{rem:flownetwork}). For an illustration of $G_{\lambda}$, see  Figure \ref{Glambda}. 
\begin{definition}
	\label{def:Glambda}
	For a partition $\lambda\in\mathbb{Z}^n_{\geq 0}$ with $n\geq 2$, let $G_\lambda$ be defined as follows:
	
	\noindent If $n=1$, let $G_\lambda$ be a single vertex $v_{22}$ defined to have flow polytope consisting of one point, $0$. Otherwise, let $G_\lambda$ have vertices 
	\[V(G_\lambda) = \{v_{ij} \mid  2\leq i\leq j \leq n\}\cup\{v_{i,i-1} \mid  3\leq i\leq n+2\}\cup\{v_{i,n+1} \mid  3\leq i \leq n+1\} \]
	and edges 
	\begin{equation*}
		\begin{split}
		E(G_\lambda) &= \{(v_{ij},v_{i+1,j}) \mid 2\leq i\leq j\leq n \}\cup\{(v_{i,n+1},v_{i+1,n+1}) \mid 3\leq i \leq n+1 \}\\
		&\quad\cup\{(v_{ij},v_{i+1,j+1}) \mid  2\leq i\leq j\leq n \}\cup\{(v_{i,i-1},v_{i+1,i}) \mid  3\leq i \leq n+1 \}.
		\end{split}
	\end{equation*}
	The default netflow vector on $G_\lambda$ is as follows:
	\begin{itemize}
		\item To vertex $v_{2j}$ for $2\leq j \leq n$, assign netflow $\lambda_{j-1}-\lambda_{j}$.
		\item To vertex $v_{n+2,n+1}$, assign netflow $\lambda_{n}-\lambda_{1}$.
		\item To all other vertices, assign netflow $0$.
	\end{itemize}
	Given a flow on $G_\lambda$, denote the flow value on each edge $(v_{ij},v_{i+1,j})$ by $a_{ij}$, and denote the flow value on each edge $(v_{ij},v_{i+1,j+1})$ by $b_{ij}$.
\end{definition}

\begin{figure}[ht]
	\includegraphics[scale=.55]{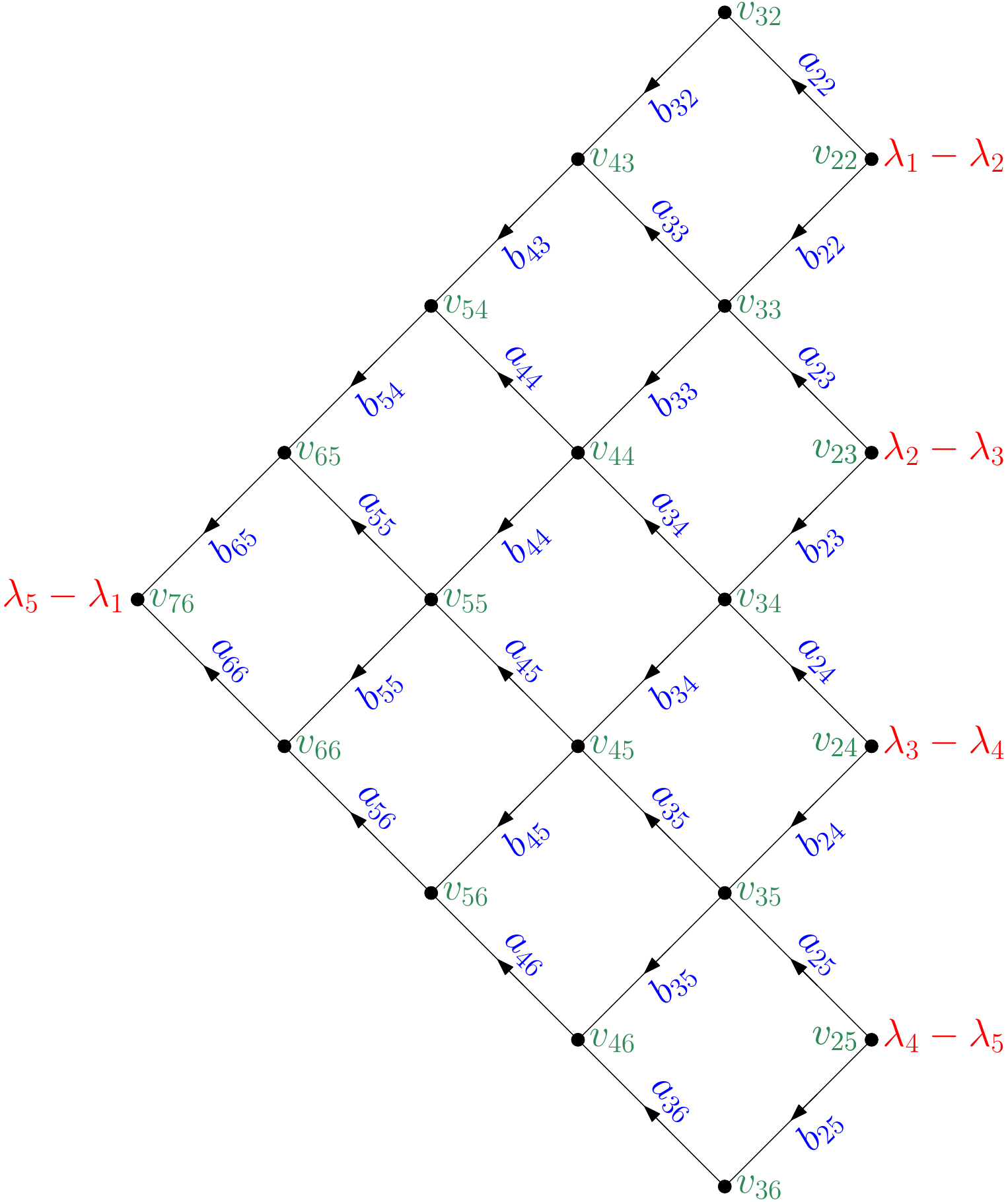}
	\caption{The flow network $G_\lambda$ with $\ell(\lambda)=5$.}
	\label{Glambda}
\end{figure} 
 
\bigskip
\begin{proof}[Proof of Theorem \ref{thm:gtflowpolytope}.]
	To map a point $(x_{ij})_{i,j}\in \mathrm{GT}(\lambda)$ to $\mathcal{F}_{G_{\lambda}}$, use the map
	\begin{align*}
	a_{i\,j}&=x_{i-1,j-1}-x_{ij},\\
	b_{i\,j}&=x_{ij}-x_{i-1,j}.
	\end{align*}
	Conversely, to map a flow $f\in\mathcal{F}_{G_\lambda}$ to $\mathrm{\mathrm{GT}(\lambda)}$, use either
	\[x_{ij}=\lambda_j+\sum_{k=2}^{i}b_{kj}\mbox{\hspace{2ex}or\hspace{2ex} } x_{ij}=\lambda_{j-i+1}-\sum_{k=0}^{i-2}a_{i-k,j-k}.\]
	It is easily checked these two maps are inverses of each other and are both integral, completing the proof.
\end{proof}

\begin{example}
	\label{exp:coordinates}
	For $n=5$, the integral equivalences between $\mathrm{GT}(\lambda)$ and $\mathcal{F}_{G_\lambda}$ are: \newline
	\begin{minipage}{.4\linewidth}
		\centering
		\includegraphics[scale=.40]{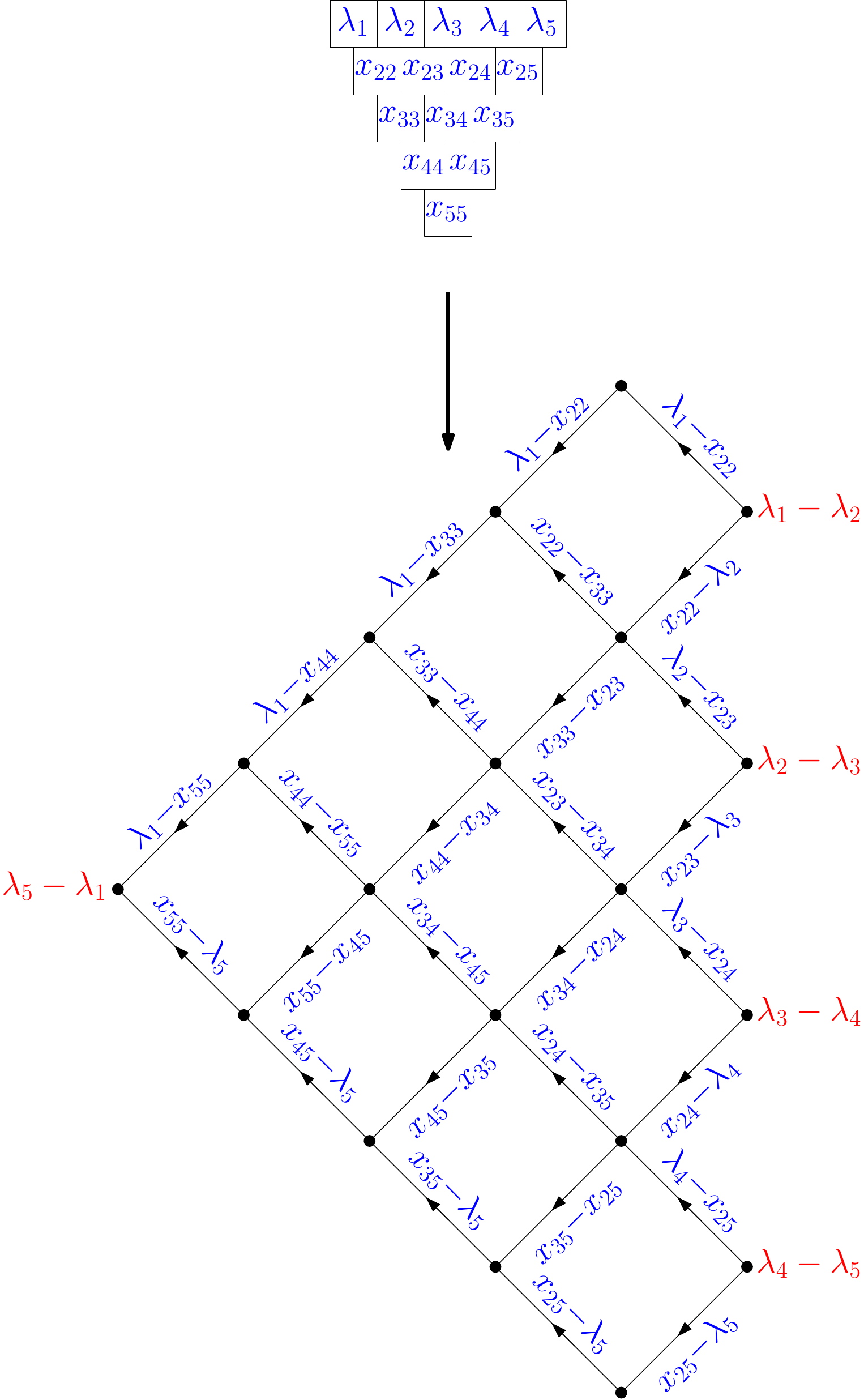}
	\end{minipage}%
	\begin{minipage}{0.5\linewidth}
		\centering
		\includegraphics[scale=.40]{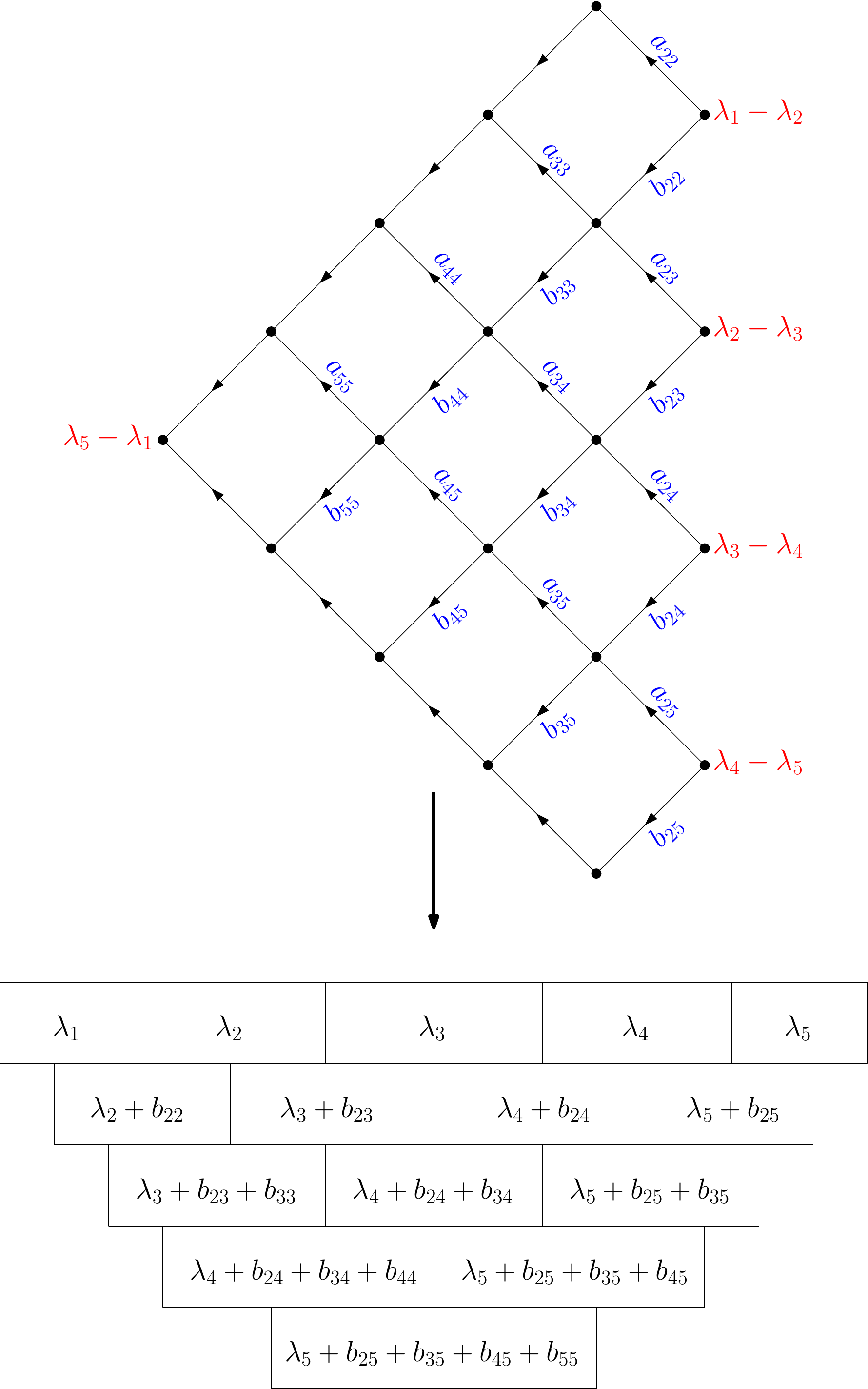}
	\end{minipage}
\end{example}

\subsection{Consequences of the Gelfand-Tsetlin polytope being a flow polytope} Here we provide a few corollaries to the Gelfand-Tsetlin polytope  $\mathrm{GT}(\lambda)$ being integrally equivalent to the flow polytope $\F_{G_{\lambda}}$. In \cite{paper2} we give further applications of this result, particularly about the volume and Ehrhart polynomial of Gelfand-Tsetlin polytopes. The corollaries presented below are all well-known; we include them here to demonstrate proofs via flow polytopes. We begin with two well-known results about flow polytopes, and then we give their applications to Gelfand-Tsetlin polytopes.

\begin{lemma}[\cite{BV2}]
	\label{prop:flowminkowskidecomposition}
	For a graph $G$ on $[n+1]$ and nonnegative integers $a_1,\ldots,a_n$,
	\[\mathcal{F}_G(a_1,\ldots,a_n,-\sum_{i=1}^{n}a_i) = a_1\mathcal{F}_G(e_1-e_{n+1})+a_2\mathcal{F}_G(e_2-e_{n+1})+\cdots+a_n\mathcal{F}_G(e_n-e_{n+1}).\]
\end{lemma}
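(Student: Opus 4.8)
The plan is to prove the two inclusions separately. Throughout, write $a = (a_1,\dots,a_n,-\sum_i a_i)$ and observe that, viewed as a vector in $\mathbb{R}^{n+1}$, it satisfies $a = \sum_{i=1}^n a_i(e_i - e_{n+1})$.

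The inclusion $\supseteq$ is immediate from linearity: if $f_i \in \mathcal{F}_G(e_i - e_{n+1})$ for each $i$, then $f := \sum_i a_i f_i$ lies in $\mathbb{R}_{\geq 0}^{E(G)}$, since it is a nonnegative combination (here $a_i \geq 0$) of nonnegative vectors, and $M_G f = \sum_i a_i M_G f_i = \sum_i a_i(e_i - e_{n+1}) = a$, so $f \in \mathcal{F}_G(a)$. Since this holds for every choice of the $f_i$, the Minkowski sum on the right is contained in $\mathcal{F}_G(a)$.

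For the inclusion $\subseteq$, fix $f \in \mathcal{F}_G(a)$; the key tool is the classical flow-decomposition lemma. Because $G$ is a directed acyclic graph in which every edge runs from a smaller- to a larger-labelled vertex, vertex $n+1$ is the unique sink, and the netflow is nonnegative at $1,\dots,n$ and negative only at $n+1$. Hence $f$ can be written as a finite nonnegative combination $f = \sum_P w_P\,\mathbf{1}_P$ of indicator vectors $\mathbf{1}_P \in \{0,1\}^{E(G)}$ of directed paths $P$, each running from some vertex $i(P) \in [n]$ to $n+1$, and moreover $\sum_{P\,:\,i(P)=i} w_P = a_i$ for every $i \in [n]$. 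This is the standard induction on $\sum_e f(e)$: since $f \neq 0$ forces some $a_i > 0$ in a DAG, start at such an $i$ and follow positive-flow edges forward (flow conservation at each non-sink intermediate vertex, together with positive inflow, provides a positive-flow out-edge, and acyclicity forbids repeating a vertex, so the walk reaches $n+1$); subtract $w\,\mathbf{1}_P$ where $w$ is the minimum flow along $P$, which keeps $f$ nonnegative, decreases $\operatorname{supp}(f)$, and replaces $a$ by $a - w(e_{i}-e_{n+1})$; recurse, and the weights telescope to give the stated totals. Now group the paths by their starting vertex: for $i$ with $a_i > 0$ set $f_i = \tfrac{1}{a_i}\sum_{P\,:\,i(P)=i} w_P\,\mathbf{1}_P$. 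Each such $\mathbf{1}_P$ satisfies $M_G\mathbf{1}_P = e_i - e_{n+1}$, hence $\mathbf{1}_P \in \mathcal{F}_G(e_i - e_{n+1})$, and the coefficients $w_P/a_i$ are nonnegative with sum $1$, so $f_i$ is a convex combination of points of the convex polytope $\mathcal{F}_G(e_i-e_{n+1})$, whence $f_i \in \mathcal{F}_G(e_i-e_{n+1})$. For $i$ with $a_i = 0$ no path starts at $i$, so that block contributes $0$ and the term $a_i\,\mathcal{F}_G(e_i-e_{n+1})$ may be taken to be $\{0\}$ (all the relevant flow polytopes being nonempty for the graphs to which we apply the lemma). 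Therefore $f = \sum_i a_i f_i \in \sum_i a_i\,\mathcal{F}_G(e_i-e_{n+1})$, completing the proof.

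The only genuinely substantive step is the flow-decomposition lemma invoked above; everything else is bookkeeping. I expect it to be the main point of care, but acyclicity of $G$ makes it routine — in particular it rules out the cycle-flow terms that must be handled in the general (non-acyclic) form of flow decomposition.
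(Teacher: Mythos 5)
Your overall route---proving $\supseteq$ by adding flows edgewise and $\subseteq$ by decomposing a flow on the acyclic graph $G$ into weighted source-to-sink paths---is sound, and the second half is a legitimate, more explicit alternative to the paper's one-line sketch (the paper handles $\supseteq$ exactly as you do and disposes of $\subseteq$ by ``induction on the number of nonzero $a_i$''). However, the induction you give for the path decomposition has a genuine flaw: you subtract $w\,\mathbf{1}_P$ with $w$ equal to the minimum flow along $P$, without capping $w$ at $a_{i(P)}$. When the starting vertex $i(P)$ has positive inflow, this $w$ can exceed $a_{i(P)}$, and then the residual netflow at $i(P)$ becomes negative. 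That destroys precisely the invariant your recursion relies on: the claim that $f\neq 0$ forces some positive $a_i$, the claim that a walk along positive-flow edges from a positive-netflow vertex must reach $n+1$ (this uses nonnegativity of the netflow at every intermediate vertex), and the telescoping identity $\sum_{P:\,i(P)=i} w_P = a_i$. Concretely, take $n=2$, $G$ the path $1\to 2\to 3$, $a=(5,1,-6)$, $f(1,2)=5$, $f(2,3)=6$: starting at $i=2$ you subtract $6\,\mathbf{1}_{2\to 3}$ (already $6\neq a_2=1$), leaving netflow $(5,-5,0)$ and a residual flow whose walk from vertex $1$ gets stuck at vertex $2$, which is not the sink.

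The repair is short and standard: take $w=\min\bigl(a_{i(P)},\,\min_{e\in P} f(e)\bigr)$, or equivalently always start the walk at the smallest $i$ with $a_i>0$, for which the inflow is forced to vanish and hence $w\le a_i$ automatically. With the capped $w$ the netflows at $1,\dots,n$ stay nonnegative, and each step either zeroes an edge of $\operatorname{supp}(f)$ or zeroes one of the $a_i$, so the right induction parameter is $|\operatorname{supp}(f)|$ plus the number of positive $a_i$ (note that $\sum_e f(e)$ is a real number for a general point of the polytope, so by itself it is not a valid induction parameter). Everything after the decomposition---grouping paths by starting vertex, rescaling by $1/a_i$, convexity of $\mathcal{F}_G(e_i-e_{n+1})$, and your remark about the zero or empty terms---is fine.
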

\begin{proof}
	One inclusion is proven by adding flows edgewise. The other is shown by induction on the number of nonzero $a_i$.
\end{proof}

\begin{corollary}
	If $G$ is a graph on $[n+1]$ and $a_1,\ldots,a_n,b_1,\ldots,b_n$ are nonnegative integers, then 
	\[\mathcal{F}_G(a_1,\ldots,a_n,-\sum_{i=1}^{n}a_i)+\mathcal{F}_G(b_1,\ldots,b_n,-\sum_{i=1}^{n}b_i) = \mathcal{F}_G(a_1+b_1,\ldots,a_n+b_n,-\sum_{i=1}^{n}a_i+b_i). \]
\end{corollary}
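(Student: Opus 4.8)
The plan is to derive the corollary directly from Lemma~\ref{prop:flowminkowskidecomposition}. Write $a = (a_1, \dots, a_n, -\sum_{i=1}^n a_i)$, $b = (b_1, \dots, b_n, -\sum_{i=1}^n b_i)$, and $a+b = (a_1+b_1, \dots, a_n+b_n, -\sum_{i=1}^n (a_i+b_i))$. Applying the lemma to each of these three netflow vectors gives the Minkowski-sum decompositions
\[\mathcal{F}_G(a) = \sum_{i=1}^n a_i \mathcal{F}_G(e_i - e_{n+1}), \qquad \mathcal{F}_G(b) = \sum_{i=1}^n b_i \mathcal{F}_G(e_i - e_{n+1}), \qquad \mathcal{F}_G(a+b) = \sum_{i=1}^n (a_i+b_i)\,\mathcal{F}_G(e_i - e_{n+1}).\]

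Next I would invoke two elementary facts about Minkowski addition of convex sets: that it is commutative and associative, so that the summands in the decomposition of $\mathcal{F}_G(a) + \mathcal{F}_G(b)$ may be regrouped by the index $i$; and that for a convex set $P$ and nonnegative reals $s,t$ one has $sP + tP = (s+t)P$. The inclusion $(s+t)P \subseteq sP + tP$ is immediate since $(s+t)z = sz + tz$, and the reverse follows, when $s+t>0$, by writing $sx + ty = (s+t)\bigl(\tfrac{s}{s+t}x + \tfrac{t}{s+t}y\bigr)$ and using convexity, while the case $s=t=0$ is trivial. Combining these observations,
\[\mathcal{F}_G(a) + \mathcal{F}_G(b) = \sum_{i=1}^n \bigl( a_i\,\mathcal{F}_G(e_i-e_{n+1}) + b_i\,\mathcal{F}_G(e_i-e_{n+1}) \bigr) = \sum_{i=1}^n (a_i+b_i)\,\mathcal{F}_G(e_i-e_{n+1}) = \mathcal{F}_G(a+b),\]
which is the claimed identity.

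There is essentially no obstacle here, since all the content is carried by Lemma~\ref{prop:flowminkowskidecomposition}; the only point worth a word of care is the degenerate case in which some $a_i$ or $b_i$ vanishes. Then the corresponding summand $a_i\,\mathcal{F}_G(e_i-e_{n+1})$ is the single point $\{0\}$ (or empty, if $\mathcal{F}_G(e_i-e_{n+1})$ itself is empty, in which case both sides of the corollary are empty), and in either case it contributes identically on both sides, so the identity is unaffected. One could instead argue directly — adding flows edgewise for the inclusion $\mathcal{F}_G(a) + \mathcal{F}_G(b) \subseteq \mathcal{F}_G(a+b)$ and splitting a flow realizing netflow $a+b$ into a sum of flows realizing $a$ and $b$ for the reverse — but routing through the already-stated lemma is cleaner and avoids repeating its inductive argument.
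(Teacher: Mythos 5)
Your proof is correct and takes essentially the same route as the paper: both arguments reduce the corollary to Lemma~\ref{prop:flowminkowskidecomposition}, the paper by inducting on the number of nonzero $b_i$, and you by applying the lemma to $a$, $b$, and $a+b$ and regrouping with the standard convexity identity $sP+tP=(s+t)P$. The difference is purely organizational, so nothing further is needed.
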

\begin{proof}
	Induct on the number of nonzero $b_i$ and use Lemma \ref{prop:flowminkowskidecomposition}.
\end{proof}

As a consequence of the previous two results and the integral equivalence of $\mathrm{GT}(\lambda)$ and $\mathcal{F}_{G_\lambda}$, we obtain the following two well-known facts about Gelfand-Tsetlin polytopes.
\begin{namedlemma}[\ref{lem:gtsum}]  
	If $\lambda$ is a partition with $n$ parts, then the Gelfand-Tsetlin polytope $\mathrm{GT}(\lambda)$ decomposes as the Minkowski sum
	\[\mathrm{GT}(\lambda) = \sum_{k=1}^{n}(\lambda_k-\lambda_{k+1})\mathrm{GT}(1^k0^{n-k}),\]
	where $\lambda_{n+1}$ is taken to be zero.
\end{namedlemma}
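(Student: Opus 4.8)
The plan is to deduce the Minkowski decomposition of $\GT(\lambda)$ from the analogous decomposition for flow polytopes (Lemma~\ref{prop:flowminkowskidecomposition} and its Corollary) together with the integral equivalence $\GT(\lambda)\cong\F_{G_\lambda}$ established in Theorem~\ref{thm:gtflowpolytope}. First I would record the effect of the netflow vector on the construction: in Definition~\ref{def:Glambda}, the graph $G_\lambda$ is fixed while only the netflow depends on $\lambda$, with vertex $v_{2j}$ receiving netflow $\lambda_{j-1}-\lambda_j$ for $2\le j\le n$ and vertex $v_{n+2,n+1}$ receiving $\lambda_n-\lambda_1$. Writing $\mu_k=(1^k0^{n-k})$, one checks directly that the netflow for $\mu_k$ is supported at $v_{2,k+1}$ (value $1$, since $(\mu_k)_k-(\mu_k)_{k+1}=1$ is the only nonzero difference among the interior entries) and at $v_{n+2,n+1}$ (value $-1$), i.e.\ it is exactly $e_{p(k)}-e_q$ where $p(k)$ is the index of vertex $v_{2,k+1}$ and $q$ is the index of $v_{n+2,n+1}$ in the chosen vertex ordering.

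Next, since $\sum_{k=1}^n(\lambda_k-\lambda_{k+1})=\lambda_1$ (telescoping, with $\lambda_{n+1}=0$), the netflow of $G_\lambda$ is the nonnegative linear combination $\sum_{k=1}^n(\lambda_k-\lambda_{k+1})\bigl(e_{p(k)}-e_q\bigr)$ of the elementary netflows appearing in Lemma~\ref{prop:flowminkowskidecomposition}. Applying that lemma (in the form of its Corollary, to handle repeated summands cleanly) gives
\[
\F_{G_\lambda}=\sum_{k=1}^n(\lambda_k-\lambda_{k+1})\,\F_{G_{\mu_k}},
\]
where I have used that $G_{\mu_k}=G_\lambda$ as a graph and that its netflow is $e_{p(k)}-e_q$. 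Now I would transport this identity back through the integral equivalences: the affine map $\varphi_\lambda\colon\GT(\lambda)\to\F_{G_\lambda}$ of Theorem~\ref{thm:gtflowpolytope} has linear part independent of $\lambda$ (the relations $a_{ij}=x_{i-1,j-1}-x_{ij}$, $b_{ij}=x_{ij}-x_{i-1,j}$ do not involve $\lambda$), so it intertwines Minkowski sums, and likewise $\varphi_{\mu_k}$ for each $k$ has the same linear part. Pulling back yields $\GT(\lambda)=\sum_{k=1}^n(\lambda_k-\lambda_{k+1})\GT(1^k0^{n-k})$, as claimed; one should note the degenerate cases $n=1$ and terms with $\lambda_k-\lambda_{k+1}=0$ cause no trouble since a zero dilate of a polytope is a point and $\GT(1^n)$ (resp.\ the base point of $G_\lambda$) is a single lattice point that can be absorbed into a translation.

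The main obstacle I anticipate is purely bookkeeping rather than conceptual: verifying that the netflow of $G_{\mu_k}$ really is a single elementary vector $e_i-e_{n+1}$ (after relabeling vertices of $G_\lambda$ by $[n'+1]$ in a way compatible with edge directions), and checking that the common linear part of the maps $\varphi_\lambda$ genuinely commutes with Minkowski addition and dilation on the nose (not merely up to translation). Both amount to unwinding Definition~\ref{def:Glambda} and the formulas in the proof of Theorem~\ref{thm:gtflowpolytope}; once the indexing is pinned down, the decomposition is immediate from Lemma~\ref{prop:flowminkowskidecomposition}.
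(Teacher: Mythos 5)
Your proposal follows exactly the paper's route: the paper obtains Lemma~\ref{lem:gtsum} as an immediate consequence of Lemma~\ref{prop:flowminkowskidecomposition} (and its corollary) together with the integral equivalence of Theorem~\ref{thm:gtflowpolytope}, which is precisely the argument you outline, only in more detail. The bookkeeping you flag does work out: the $k=n$ term corresponds to the zero-netflow network with $\F_{G_{1^n}}=\{0\}$, and since the inverse equivalence $\F_{G_\lambda}\to\GT(\lambda)$ has $\lambda$-independent linear part and translation part $x_{ij}=\lambda_j$, the translations $\sum_k(\lambda_k-\lambda_{k+1})\cdot(\text{point }\GT(1^k0^{n-k})\text{-shift})$ sum exactly to the shift for $\lambda$, so the pullback is legitimate.
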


\begin{lemma}
	If $\lambda$ and $\mu$ are partitions with $n$ parts, then 
	\[\mathrm{GT}(\lambda)+\mathrm{GT}(\mu) = \mathrm{GT}(\lambda+\mu).\]
\end{lemma}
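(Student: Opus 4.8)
The plan is to derive this directly from Theorem~\ref{thm:gtflowpolytope}, in the same spirit as the two preceding applications, by observing that the Minkowski-sum statement is essentially a statement about flow polytopes on a \emph{fixed} graph. The crucial point is that the graph $G_\lambda$ of Definition~\ref{def:Glambda} depends only on the number of parts $n=\ell(\lambda)$ and not on the entries of $\lambda$; only the netflow vector depends on $\lambda$. Writing $a(\lambda)$ for that netflow vector, all of its potentially nonzero entries other than the last are of the form $\lambda_{j-1}-\lambda_j$ (attached to the vertices $v_{2j}$), which are nonnegative precisely because $\lambda$ is a partition, and the assignment $\lambda\mapsto a(\lambda)$ is additive. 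Since $\lambda+\mu$ is again a partition with $n$ parts, we have $a(\lambda)+a(\mu)=a(\lambda+\mu)$, while $G_\lambda=G_\mu=G_{\lambda+\mu}$ as graphs.

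The steps are then as follows. First, apply the corollary to Lemma~\ref{prop:flowminkowskidecomposition} (Minkowski additivity of flow polytopes of a fixed graph in the netflow) with this common graph and netflows $a(\lambda)$, $a(\mu)$ to obtain $\mathcal{F}_{G_\lambda}+\mathcal{F}_{G_\mu}=\mathcal{F}_{G_{\lambda+\mu}}$. Second, transport this equality back through the integral equivalence of Theorem~\ref{thm:gtflowpolytope}. The key observation here is that the equivalence $\mathrm{GT}(\nu)\to\mathcal{F}_{G_\nu}$ produced there, given by $a_{ij}=x_{i-1,j-1}-x_{ij}$ and $b_{ij}=x_{ij}-x_{i-1,j}$, is the restriction to $\mathrm{GT}(\nu)$ of one and the same \emph{linear} map $\Phi$ of the ambient space $\mathbb{R}^{\binom{n+1}{2}}$, independent of $\nu$ (the dependence on $\nu$ lives only in the domain). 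Hence $\Phi$ is compatible with Minkowski sums, and
\[\Phi\big(\mathrm{GT}(\lambda)+\mathrm{GT}(\mu)\big)=\Phi(\mathrm{GT}(\lambda))+\Phi(\mathrm{GT}(\mu))=\mathcal{F}_{G_\lambda}+\mathcal{F}_{G_\mu}=\mathcal{F}_{G_{\lambda+\mu}}=\Phi(\mathrm{GT}(\lambda+\mu)).\]
Finally, $\mathrm{GT}(\lambda)+\mathrm{GT}(\mu)$ and $\mathrm{GT}(\lambda+\mu)$ both lie in the affine subspace of $\mathbb{R}^{\binom{n+1}{2}}$ on which the entries $x_{in}$ equal $\lambda_i+\mu_i$, and $\Phi$ is injective on that subspace (Theorem~\ref{thm:gtflowpolytope} provides the explicit array-recovery inverse there), so the two polytopes are equal.

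I do not expect a real obstacle: the whole argument is the bookkeeping observation that $G_\lambda$ is $\lambda$-independent as a graph and that the integral equivalence is realized by a single linear map, after which the corollary to Lemma~\ref{prop:flowminkowskidecomposition} does the work. The only point requiring a moment's care is the final injectivity/pull-back step — one must restrict $\Phi$ to the correct affine subspace. Should one wish to bypass flow polytopes entirely, the statement also follows immediately from Lemma~\ref{lem:gtsum}: it gives $\mathrm{GT}(\lambda+\mu)=\sum_{k=1}^n\big((\lambda_k-\lambda_{k+1})+(\mu_k-\mu_{k+1})\big)\mathrm{GT}(1^k0^{n-k})$, and since each $\lambda_k-\lambda_{k+1}$ and each $\mu_k-\mu_{k+1}$ is nonnegative, splitting each coefficient and regrouping recovers $\mathrm{GT}(\lambda)+\mathrm{GT}(\mu)$.
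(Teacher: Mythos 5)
Your proposal is correct and follows essentially the same route as the paper, which deduces the lemma from the integral equivalence $\mathrm{GT}(\lambda)\cong\mathcal{F}_{G_\lambda}$ of Theorem~\ref{thm:gtflowpolytope} together with the corollary to Lemma~\ref{prop:flowminkowskidecomposition}, using exactly the observations you make (the graph $G_\lambda$ depends only on $n$, the netflow is additive in $\lambda$, and the equivalence is induced by one fixed linear map, so it respects Minkowski sums). Your care about injectivity on the affine slice $x_{in}=\lambda_i+\mu_i$, and the alternative derivation from Lemma~\ref{lem:gtsum}, are both sound elaborations of details the paper leaves implicit.
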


\vspace{2ex}
\noindent Recall that the \textbf{Schur polynomial} $s_\lambda$ can be expressed as
\[s_{\lambda}(x_1,\ldots,x_n)=\sum_{P\in \mathrm{GT}(\lambda)\cap \mathbb{Z}^{\binom{n+1}{2}}} x_1^{wt(P)_1}x_2^{wt(P)_2}\cdots x_n^{wt(P)_n} \]
where $wt\colon\mathbb{R}^{\binom{n+1}{2}}\to\mathbb{R}^n$ is the \textbf{weight map}, defined by 
\[wt(P)_i = \sum_{j=i}^{n}x_{ij}-\sum_{j=i+1}^{n} x_{i+1,j}\]
for $P\in \mathrm{GT}(\lambda)$.
We now introduce the flow polytopal analogue of $wt$ and study it. Recall the variables $\{a_{ij}\}_{i,j}\cup\{b_{ij}\}_{i,j}$ of Definition \ref{def:Glambda}: in $\mathcal{F}_{G_\lambda}$, $a_{ij}$ represents the flow on the edge $(v_{ij},v_{i+1,j})$ and $b_{ij}$ represents the flow on the edge  $(v_{ij},v_{i+1,j+1})$.

\begin{definition}
	Let $\lambda$ be a partition with $n$ parts. Define the \textbf{graphical weight map} $gwt\colon\mathbb{R}^{E(G_\lambda)}\to \mathbb{R}^n$ by setting
	\[gwt(x_{(v_{ij},v_{i+1,j})}) =e_{i-1}\mbox{\hspace{2ex} and \hspace{1ex}} gwt(x_{(v_{ij},v_{i+1,j+1})}) =0, \]
	so in particular
	\[gwt(a_{ij}) =e_{i-1}\mbox{\hspace{2ex} and \hspace{1ex}} gwt(b_{ij}) =0. \]
\end{definition}

\begin{proposition}
	\label{prop:graphicalweightshift}
	For a partition $\lambda$ with $n$ parts, let $f\in\mathcal{F}_{G_{\lambda}}$ correspond to $P_f\in \mathrm{GT}(\lambda)$. Then, the maps $gwt$ and $wt$ are related by the translation
	\[wt(P_f)=gwt(f) +\lambda_n \bm{1}_n, \]
	where $\bm{1}_n$ denotes the vector of all ones in $\mathbb{R}^n$.
\end{proposition}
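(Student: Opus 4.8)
The plan is to compute $gwt(f)$ one coordinate at a time directly from the explicit bijection $f\leftrightarrow P_f$ in the proof of Theorem~\ref{thm:gtflowpolytope}, and to match the result with the displayed formula for $wt$. Write $P_f=(x_{ij})$. Recall that $a_{ij}=x_{i-1,j-1}-x_{ij}$ and $b_{ij}=x_{ij}-x_{i-1,j}$ for $2\le i\le j\le n$, and that $x_{1j}=\lambda_j$ (the $i=1$ case of the inverse map $x_{ij}=\lambda_j+\sum_{k=2}^i b_{kj}$); in particular $x_{1n}=\lambda_n$, which is where the additive constant $\lambda_n$ will come from.

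First I would note that $gwt$ annihilates every ``diagonal'' edge of $G_\lambda$, i.e. every edge of the form $(v_{pq},v_{p+1,q+1})$ --- this covers both the $b$-edges and the outer edges $(v_{i,i-1},v_{i+1,i})$ --- so that only the ``horizontal'' edges $(v_{pq},v_{p+1,q})$ contribute, each through $e_{p-1}$. The horizontal edges whose tail sits at level $i+1$ (equivalently, those contributing $e_i$) are the edges $(v_{i+1,q},v_{i+2,q})$ for $i+1\le q\le n$, of flow $a_{i+1,q}$, together with the outer-right edge $(v_{i+1,n+1},v_{i+2,n+1})$, whose flow I will call $a_{i+1,n+1}$ (this last edge is present exactly when $i\ge 2$; put $a_{2,n+1}=0$). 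Hence $gwt(f)_i=\sum_{q=i+1}^n a_{i+1,q}+a_{i+1,n+1}$. Substituting $a_{i+1,q}=x_{i,q-1}-x_{i+1,q}$ and telescoping the first sum gives $\sum_{q=i+1}^n a_{i+1,q}=\sum_{q=i}^{n-1}x_{iq}-\sum_{q=i+1}^n x_{i+1,q}=wt(P_f)_i-x_{in}$.

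It then remains to pin down $a_{i+1,n+1}$. The key observation is that each vertex $v_{k,n+1}$ with $3\le k\le n+1$ has netflow $0$ and is incident only to the two incoming edges $(v_{k-1,n+1},v_{k,n+1})$ (absent if $k=3$) and $(v_{k-1,n},v_{k,n+1})$ of flow $b_{k-1,n}$, and the single outgoing edge $(v_{k,n+1},v_{k+1,n+1})$; conservation at $v_{3,n+1},v_{4,n+1},\dots$ therefore telescopes to $a_{k,n+1}=\sum_{l=2}^{k-1}b_{l,n}=x_{k-1,n}-x_{1n}=x_{k-1,n}-\lambda_n$. Taking $k=i+1$ and combining the displays gives $gwt(f)_i=(wt(P_f)_i-x_{in})+(x_{in}-\lambda_n)=wt(P_f)_i-\lambda_n$ for all $i$ (the case $i=1$ being degenerate, the boundary term being $x_{1n}-\lambda_n=0$), which is precisely $wt(P_f)=gwt(f)+\lambda_n\bm{1}_n$. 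I expect the only genuine obstacle here to be the bookkeeping of the outer edges of $G_\lambda$: one has to notice that the right-boundary edges $(v_{i,n+1},v_{i+1,n+1})$ are ``horizontal'' and so do enter $gwt$, and then determine their flow values via conservation (equivalently, those values are forced by the requirement that the map of Theorem~\ref{thm:gtflowpolytope} actually land in $\mathcal{F}_{G_\lambda}$); after that, everything is elementary telescoping.
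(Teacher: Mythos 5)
Your proof is correct and follows essentially the same route as the paper's: express $gwt(f)_i$ as $\sum_{q=i+1}^{n}a_{i+1,q}$ plus the right-boundary flow $a_{i+1,n+1}$, telescope the first sum via the integral equivalence of Theorem~\ref{thm:gtflowpolytope} to get $wt(P_f)_i-x_{in}$, and identify $a_{i+1,n+1}=b_{2n}+\cdots+b_{in}=x_{in}-\lambda_n$ (you justify this by conservation at the vertices $v_{k,n+1}$, which the paper asserts without comment). Your bookkeeping of the boundary edges, including the convention $a_{2,n+1}=0$, is consistent with the paper, so nothing further is needed.
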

\begin{proof}
	We have
	\begin{align*}
	gwt(f)_i&=a_{i+1,i+1}+\cdots+a_{i+1,n}+a_{i+1,n+1}\\
	&=a_{i+1,i+1}+\cdots+a_{i+1,n}+b_{2n}+\cdots+b_{in}.
	\end{align*}
	Using the integral equivalence $x_{ij}=\lambda_{j-i+1}-\sum_{k=0}^{i-2}a_{i-k,j-k}$ between $\mathrm{GT}(\lambda)$ and $\mathcal{F}_{G_\lambda}$,
	\begin{align*}
	wt(P_f)_i&=\sum_{j=i}^{n}{x_{ij}}-\sum_{j=i+1}^{n}x_{i+1,j}\\
	&=x_{in}+\sum_{j=i}^{n-1}\left(x_{ij}-x_{i+1,j+1}\right)\\
	&=x_{in}+\sum_{j=i+1}^{n}a_{i+1,j}.
	\end{align*}
	Now, using the integral equivalence $x_{ij}=\lambda_j+\sum_{k=2}^{i}b_{kj}$, we have
	\begin{align*}
	(gwt(f)-wt(P_f))_i&=x_{in}-\sum_{k=2}^{i}b_{kn}\\
	&=\left(\lambda_n+\sum_{k=2}^{i}b_{kn}\right)-\sum_{k=2}^{i}b_{kn}\\
	&=\lambda_n.\qedhere
	\end{align*}
\end{proof}

Using the map $gwt$, we now describe the polytopes $\mathrm{GT}(1^k0^{n-k})$ and rederive a result of Postnikov from \cite{beyond}.
\begin{proposition}
	\label{prop:gwt}
	If $\lambda$ is of the form $1^k0^{n-k}$ with $1\leq k\leq n$, then $gwt(\mathcal{F}_{G_\lambda})$ equals the hypersimplex $\Delta_{k,n}=\mathrm{Conv}(\{x\in[0,1]^n \mid x_1+x_2+\cdots+x_n=k\})$. 
\end{proposition}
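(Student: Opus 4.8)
The plan is to push the statement through the integral equivalence of Theorem~\ref{thm:gtflowpolytope} and then recognize the answer as a hypersimplex. Write $\lambda = 1^k0^{n-k}$. By Proposition~\ref{prop:graphicalweightshift}, if $f\in\F_{G_\lambda}$ corresponds to $P_f\in\GT(\lambda)$ then $gwt(f) = wt(P_f) - \lambda_n\bm 1_n$, and since $f\mapsto P_f$ is a bijection this gives $gwt(\F_{G_\lambda}) = wt(\GT(\lambda)) - \lambda_n\bm 1_n$. For $1\le k<n$ we have $\lambda_n = 0$, so it suffices to prove $wt(\GT(1^k0^{n-k})) = \Delta_{k,n}$; the case $k=n$ is degenerate, $\GT(1^n)$ being a single point, and is handled by the same translation.

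For the reduction target I would use that $\GT(\lambda)$ is a lattice polytope and that $wt$ is linear with integer coefficient matrix. Hence every vertex of $\GT(\lambda)$ is a lattice point, its $wt$-image is an integer vector, and $wt(\GT(\lambda))$ is the convex hull of the finitely many vectors $wt(P)$, $P$ a lattice point of $\GT(1^k0^{n-k})$. The stated monomial expansion $s_\lambda = \sum_P x^{wt(P)}$ pins down this set: since $s_{1^k0^{n-k}} = e_k(x_1,\dots,x_n) = \sum_{|S|=k}\prod_{i\in S}x_i$ is multiplicity free, the weights of the lattice points of $\GT(1^k0^{n-k})$ are exactly the indicator vectors $e_S$ (where $e_S = \sum_{i\in S}e_i$), one for each $k$-subset $S\subseteq[n]$. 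Equivalently, these lattice points are the column-strict fillings of a single column of height $k$ with entries in $[n]$, i.e.\ the $k$-subsets of $[n]$, with $wt$ reading off the indicator vector. Thus $wt(\GT(1^k0^{n-k})) = \mathrm{Conv}\{e_S : S\subseteq[n],\ |S|=k\}$.

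It then remains to identify this convex hull with $\Delta_{k,n}$. Here I would invoke the standard description of the hypersimplex: the set $\{x\in[0,1]^n : x_1+\cdots+x_n=k\}$ is already convex, being the intersection of the cube with an affine hyperplane, so $\Delta_{k,n}$ equals this set; and its vertices are precisely the $e_S$ with $|S|=k$, since a vertex lies in the hyperplane and must make at least $n-1$ of the inequalities $0\le x_i\le 1$ tight, which forces at least $n-1$ coordinates into $\{0,1\}$, whereupon the last coordinate is $k$ minus an integer lying in $[0,1]$ and hence is also $0$ or $1$. Therefore $\mathrm{Conv}\{e_S : |S|=k\} = \Delta_{k,n}$, completing the chain $gwt(\F_{G_\lambda}) = wt(\GT(1^k0^{n-k})) = \Delta_{k,n}$.

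A more self-contained variant, bypassing Gelfand-Tsetlin patterns, is also available and may be preferred: for $k<n$ the only vertex of $G_\lambda$ carrying positive netflow is $v_{2,k+1}$ (netflow $1$), with unique sink $v_{n+2,n+1}$ (netflow $-1$), so $\F_{G_\lambda}$ is a unit-flow polytope, and since $G_\lambda$ is acyclic its vertices are the indicator vectors of directed paths from $v_{2,k+1}$ to $v_{n+2,n+1}$. Each such path uses $n$ edges, exactly one leaving each of the rows $2,\dots,n+1$; of these, exactly $n-k$ are \emph{diagonal} edges (forced by the net column shift from $k+1$ to $n+1$, there being no edge that decreases the column or leaves column $n+1$) contributing $0$ to $gwt$, while the remaining $k$ are \emph{straight} edges, the one leaving row $i$ contributing $e_{i-1}$; hence $gwt$ of the path is $e_S$ for the corresponding $k$-subset $S\subseteq[n]$, and a short check shows every $k$-subset is realized by a feasible path. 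Either way, the only genuinely delicate points are establishing surjectivity onto all $e_S$ (the feasibility check) and the normalization in the trivial case $k=n$; the remaining steps are routine.
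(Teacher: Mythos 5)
Your main argument is correct for $1 \le k \le n-1$, but it takes a genuinely different route from the paper. You transport the statement back to the Gelfand--Tsetlin side via Theorem~\ref{thm:gtflowpolytope} and Proposition~\ref{prop:graphicalweightshift}, identify the weights of the lattice points of $\GT(1^k0^{n-k})$ with the exponent vectors of $e_k$ using the recalled expansion $s_\lambda=\sum_P x^{wt(P)}$ (together with integrality of the GT polytope), and then quote the vertex description of the hypersimplex. The paper instead stays entirely on the flow side: for $k<n$ the network $G_\lambda$ has a unique source and sink with unit netflow, the vertices of such a flow polytope are the indicator flows of source-to-sink paths (paths in a rectangular grid with $k$ straight and $n-k$ diagonal steps), and $gwt$ of these vertices is read off as exactly the $0$--$1$ vectors with coordinate sum $k$. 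Your ``self-contained variant'' is essentially the paper's proof, including the feasibility check that every $k$-subset is realized by a path. The trade-off: the paper's route is self-contained on the flow side and fits the section's stated goal of rederiving classical GT facts from flow polytopes, needing neither the integrality of $\GT(\lambda)$ nor the $s_{1^k0^{n-k}}=e_k$ expansion, both of which your main route imports; your route, conversely, avoids analyzing the graph at all.

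The one genuine defect is your treatment of $k=n$: the translation does not ``handle'' it. For $\lambda=1^n$ all netflows of $G_\lambda$ vanish, so (the graph being acyclic) $\F_{G_{1^n}}$ consists of the zero flow alone and $gwt(\F_{G_{1^n}})=\{0\}$; equivalently, your own identity $gwt(\F_{G_\lambda})=wt(\GT(\lambda))-\lambda_n\bm{1}_n$ gives $\{\bm{1}_n\}-\bm{1}_n=\{0\}$, whereas $\Delta_{n,n}=\{\bm{1}_n\}$. So at $k=n$ the asserted equality fails by exactly this translation. Note that the paper's own proof also tacitly assumes a source of netflow $1$, hence $k<n$, and the subsequent corollary only uses $k\le n-1$ (the $\lambda_n\Delta_{n,n}$ term there comes from the shift in Proposition~\ref{prop:graphicalweightshift}, not from this proposition). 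In your write-up you should either restrict to $k<n$ or flag the discrepancy explicitly rather than claim the degenerate case follows by the same argument.
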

\begin{proof}
	If $\lambda$ is of the form $1^k0^{n-k}$, then $G_\lambda$ will have a single source with netflow $1$ and a single sink with netflow $-1$. Ignoring all edges and vertices not lying on path from the source to sink (which will carry zero flow), we are left with a rectangular grid as shown in Figure \ref{fig:rectgwt}. A path from source to sink in the grid requires $k$ NW steps and $n-k$ SW steps. Recall (cf. \cite{qcp}, Lemma 3.1) that the vertices of a flow polytope with a single source and sink are exactly the flows that are nonzero only on a path from source to sink.
	
	Thus, the vertices of $\mathcal{F}_{G_\lambda}$ are exactly the flows with support a path from source to sink in the grid. These paths are in bijection with length $n$ words on $\{N,S\}$ having $k$ $N$'s (corresponding to NW steps in the path) and $n-k$ $S$'s (corresponding to SW steps in the path). By definition, the map $gwt$ takes a vertex of $\mathcal{F}_{G_\lambda}$ to the vector with ones in the positions of the $N$'s in the corresponding string, and zero elsewhere. Thus, 
	\[gwt(V(\mathcal{F}_{G_\lambda})) = \{x\in \{0,1\}^n \mid  x_1+\cdots+x_n=k \} = V(\Delta_{k,n}),\]
	so $gwt(\mathcal{F}_{G_\lambda})=\Delta_{k,n}$.
\end{proof}
\begin{figure}
		\begin{center}
			\includegraphics[scale=.45]{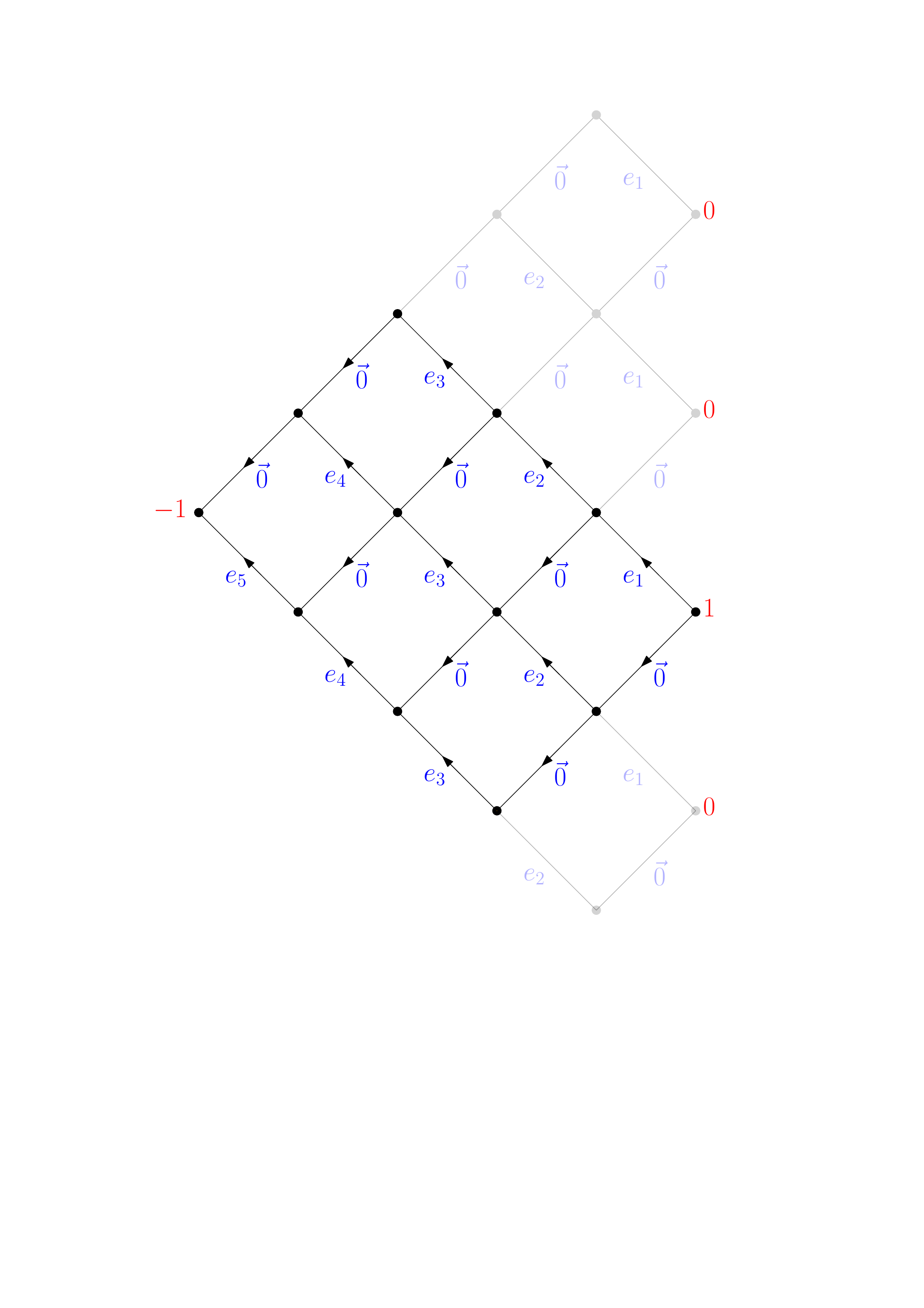}
		\end{center}
		\caption{$\mathrm{GT}(1,1,1,0,0)$ and the associated map $gwt$.}
		\label{fig:rectgwt}
	\end{figure}
 
\begin{corollary}[\cite{beyond}]
	The permutahedron $\mathcal{P}_\lambda=\mathrm{Conv}(S_n\cdot \lambda)$ of $\lambda$ equals the Minkowski sum of hypersimplices
	\[\mathcal{P}_\lambda = (\lambda_1-\lambda_2)\Delta_{1,n}+(\lambda_2-\lambda_3)\Delta_{2,n}+\cdots+(\lambda_{n-1}-\lambda_n)\Delta_{n-1,n}+\lambda_{n}\Delta_{n,n}. \] 
\end{corollary}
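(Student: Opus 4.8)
The plan is to deduce the identity by pushing the weight map $wt$ through the Minkowski decomposition of $\mathrm{GT}(\lambda)$ given by Lemma~\ref{lem:gtsum}. The one substantive ingredient is that $wt$ carries the Gelfand-Tsetlin polytope onto the permutahedron, i.e. $wt(\mathrm{GT}(\lambda)) = \mathcal{P}_\lambda$. To establish this, recall that $\mathrm{GT}(\lambda)$ is an integral polytope and $wt$ is linear, so $wt(\mathrm{GT}(\lambda))$ is the convex hull of the images of the (lattice-point) vertices of $\mathrm{GT}(\lambda)$; hence
\[wt(\mathrm{GT}(\lambda)) = \mathrm{Conv}\bigl\{wt(P) \mid P \in \mathrm{GT}(\lambda) \cap \mathbb{Z}^{\binom{n+1}{2}}\bigr\} = \mathrm{Newton}(s_\lambda),\]
the second equality being the monomial expansion of $s_\lambda$ recalled above. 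By Gelfand-Tsetlin theory the set of weights $wt(P)$ of the integer points of $\mathrm{GT}(\lambda)$ is exactly the set of weights of the irreducible $\mathfrak{gl}_n(\mathbb{C})$-module $V_\lambda$; this set contains the orbit $S_n \cdot \lambda$ and is contained in its convex hull, so $\mathrm{Newton}(s_\lambda) = \mathrm{Conv}(S_n \cdot \lambda) = \mathcal{P}_\lambda$.

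Granting this, I would apply $wt$ to the identity of Lemma~\ref{lem:gtsum}. A linear map commutes with dilations by nonnegative scalars and with Minkowski sums, and the coefficients $\lambda_k - \lambda_{k+1}$ are nonnegative integers (with $\lambda_{n+1} = 0$), so
\[\mathcal{P}_\lambda = wt(\mathrm{GT}(\lambda)) = \sum_{k=1}^n (\lambda_k - \lambda_{k+1})\, wt\bigl(\mathrm{GT}(1^k 0^{n-k})\bigr).\]
It remains to identify $wt(\mathrm{GT}(1^k 0^{n-k}))$ with the hypersimplex $\Delta_{k,n}$ for each $k$. For $1 \le k \le n-1$ the partition $1^k 0^{n-k}$ has last part $0$, so Proposition~\ref{prop:graphicalweightshift} says that $wt$ and $gwt$ coincide under the integral equivalence $\mathrm{GT}(1^k 0^{n-k}) \cong \mathcal{F}_{G_{1^k 0^{n-k}}}$ of Theorem~\ref{thm:gtflowpolytope}; combined with Proposition~\ref{prop:gwt} this gives $wt(\mathrm{GT}(1^k 0^{n-k})) = gwt(\mathcal{F}_{G_{1^k 0^{n-k}}}) = \Delta_{k,n}$. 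For $k = n$, the polytope $\mathrm{GT}(1^n)$ is the single Gelfand-Tsetlin array with all entries equal to $1$, and a direct check gives $wt(\mathrm{GT}(1^n)) = \{\bm{1}_n\} = \Delta_{n,n}$. Substituting these into the displayed sum and separating off the $k=n$ term gives exactly
\[\mathcal{P}_\lambda = \sum_{k=1}^{n-1}(\lambda_k - \lambda_{k+1})\Delta_{k,n} + \lambda_n \Delta_{n,n},\]
as claimed.

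I expect everything after the first paragraph to be routine term-by-term bookkeeping. The delicate points are two: first, the identification $wt(\mathrm{GT}(\lambda)) = \mathcal{P}_\lambda$, where one must take care that it is the image of the entire polytope---not merely of its lattice points---that equals the permutahedron (which is precisely why the linearity of $wt$ and the integrality of $\mathrm{GT}(\lambda)$ both enter), together with the standard fact that the weights of $V_\lambda$ have convex hull equal to the $\lambda$-permutahedron; and second, keeping track of the translation term $\lambda_n \bm{1}_n$ in Proposition~\ref{prop:graphicalweightshift}, which vanishes for $1^k 0^{n-k}$ when $k < n$ but forces the separate treatment of $k = n$ noted above.
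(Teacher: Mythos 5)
Your argument is correct and follows essentially the same route as the paper: both push the Minkowski decomposition of $\mathrm{GT}(\lambda)$ (Lemma~\ref{lem:gtsum}, equivalently its flow-polytope form) through the weight map and invoke Propositions~\ref{prop:graphicalweightshift} and~\ref{prop:gwt} to identify the summands with hypersimplices. The only differences are bookkeeping: the paper applies $gwt$ to $\mathcal{F}_{G_\lambda}=\sum_{k=1}^{n-1}(\lambda_k-\lambda_{k+1})\mathcal{F}_{G_{(1^k0^{n-k})}}$ and recovers the $\lambda_n\Delta_{n,n}$ term from the translation $\lambda_n\bm{1}_n$, whereas you handle $k=n$ by the direct observation $wt(\mathrm{GT}(1^n))=\{\bm{1}_n\}$, and you additionally supply a proof of $wt(\mathrm{GT}(\lambda))=\mathcal{P}_\lambda$, which the paper simply asserts.
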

\begin{proof}
	Since $wt(\mathrm{GT}(\lambda)) = \mathcal{P}_\lambda$, applying $gwt$ to both sides of 
	\[\mathcal{F}_{G_\lambda}=\sum_{k=1}^{n-1} (\lambda_k-\lambda_{k+1})\mathcal{F}_{G_{(1^k0^{n-k})}} \]
	and using Propositions \ref{prop:gwt} and \ref{prop:graphicalweightshift} yields 
	\[\mathcal{P}_\lambda - \lambda_n\bm{1}_n = (\lambda_1-\lambda_2)\Delta_{1,n}+(\lambda_2-\lambda_3)\Delta_{2,n}+\cdots+(\lambda_{n-1}-\lambda_n)\Delta_{n-1,n}.\qedhere \]
\end{proof}

\bigskip

\subsection{The Minkowski sum of Gelfand-Tsetlin polytopes}
\label{sec:sum}
In this section we observe that the Minkowski sum of Gelfand-Tsetlin polytopes $\p_D$ appearing in Theorem \ref{thm:gtsum} can be viewed naturally as a subset of a larger Gelfand-Tsetlin polytope. 

Recall the embedding of the Gelfand-Tsetlin polytopes in the sum $\p_D=\mathrm{GT}(\lambda^{(1)})+\mathrm{GT}(\lambda^{(2)})+\cdots+\mathrm{GT}(\lambda^{(n)})$ from Section \ref{sec:intro}. In light of Theorem \ref{thm:gtflowpolytope}, $\p_D$ should be integrally equivalent to a sum of flow polytopes  \[\mathcal{F}_{G_{\lambda^{(1)}}}+\cdots+\mathcal{F}_{G_{\lambda^{(n)}}}.\] Just like for the Gelfand-Tsetlin polytope sum, we must specify how the graphs $G_{\lambda^{(i)}}$, $i \in [n]$, are embedded. 
Let us  embed $G_{\lambda^{(k)}}$, $k \in [n]$,  into $G_{\lambda^{(n)}}$ by identifying $v_{ij}$ (see Definition \ref{def:Glambda}) in $G_{\lambda^{(k)}}$ with $v_{i,j+n-k}$ in $G_{\lambda^{(k)}}$. Note that the trivial case $G_{\lambda^{(1)}}$ is just a single vertex with netflow $0$ and flow polytope defined to be the single point $0$.

Lemmas \ref{lem:ms} and \ref{lem:g} follow readily by the definitions and the integral equivalence given in Theorem \ref{thm:gtflowpolytope}:

\begin{lemma} \label{lem:ms}
	The Minkowski sum 
	\[\mathrm{GT}(\lambda^{(1)})+\cdots+\mathrm{GT}(\lambda^{(n)}) \]
	is integrally equivalent to 
	\[\mathcal{F}_{G_{\lambda^{(1)}}}+\cdots+\mathcal{F}_{G_{\lambda^{(n)}}} \]  with the embedding specified above.
\end{lemma}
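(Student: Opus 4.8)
The plan is to show that the explicit integral equivalence $\mathrm{GT}(\lambda)\to\mathcal F_{G_\lambda}$ from the proof of Theorem~\ref{thm:gtflowpolytope} is ``local'' enough to be compatible with the two embeddings in play --- the coordinate embedding $y_{ij}\mapsto x_{i,j+n-k}$ of Gelfand--Tsetlin arrays and the vertex embedding $v_{ij}\mapsto v_{i,j+n-k}$ of the graphs --- and then to use that a linear map carries Minkowski sums to Minkowski sums.

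First I would record that the forward map of Theorem~\ref{thm:gtflowpolytope}, given by $a_{ij}=x_{i-1,j-1}-x_{ij}$ and $b_{ij}=x_{ij}-x_{i-1,j}$, is the restriction to the affine slice $\{x_{in}=\lambda_i:i\in[n]\}$ of one fixed \emph{linear} map $\Phi$ from $\mathbb R^{\binom{n+1}{2}}$ to the space of interior-edge coordinates, a map that does not depend on $\lambda$ (the $\lambda$-dependence enters only through the choice of slice, equivalently through the constant term of the inverse $x_{ij}=\lambda_{j-i+1}-\sum_{s=0}^{i-2}a_{i-s,j-s}$). By Theorem~\ref{thm:gtflowpolytope} this restriction is an integral bijection onto $\mathcal F_{G_\mu}$ (read in interior-edge coordinates) for every partition $\mu$. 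Next I would verify the key compatibility: because the defining formulas merely shift the second index uniformly, $\Phi\circ\iota_k=\widehat\iota_k\circ\Phi^{(k)}$, where $\iota_k$ is the coordinate embedding, $\widehat\iota_k$ the edge embedding induced by $v_{ij}\mapsto v_{i,j+n-k}$, and $\Phi^{(k)}$ the analogous linear map for size $k$; all that needs checking is that coordinates and edges outside the embedded triangle are sent to $0$, which is immediate. It then follows that $\Phi$ carries the embedded copy of $\mathrm{GT}(\lambda^{(k)})$ onto the embedded copy of $\mathcal F_{G_{\lambda^{(k)}}}$, integrally.

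Finally, since $\Phi$ is linear, $\Phi\bigl(\mathrm{GT}(\lambda^{(1)})+\cdots+\mathrm{GT}(\lambda^{(n)})\bigr)=\Phi(\mathrm{GT}(\lambda^{(1)}))+\cdots+\Phi(\mathrm{GT}(\lambda^{(n)}))=\mathcal F_{G_{\lambda^{(1)}}}+\cdots+\mathcal F_{G_{\lambda^{(n)}}}$ with the stated embeddings. To see this is an integral equivalence I would observe that on $\aff(\p_D)$ the top-row entries $x_{in}$ are constant (equal to $\sum_{k\ge i}\lambda^{(k)}_i$), so the inverse formula of Theorem~\ref{thm:gtflowpolytope} with these fixed values in place of $\lambda$ exhibits $\Phi|_{\aff(\p_D)}$ as injective; since $\Phi$ and its inverse both have integer coefficients, lattice points correspond to lattice points. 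I expect the only genuine work to be the bookkeeping in the compatibility step --- matching the offset indexing of the vertices of $G_\lambda$ (the families $v_{ij}$, $v_{i,i-1}$, $v_{i,n+1}$) against the Gelfand--Tsetlin coordinates $x_{ij}$, and confirming that the boundary edges and out-of-triangle coordinates all get value $0$ so that the square commutes exactly; the rest is formal.
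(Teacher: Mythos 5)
Your proposal is correct and matches the paper's approach: the paper dismisses this lemma with the single remark that it ``follows readily by the definitions and the integral equivalence given in Theorem~\ref{thm:gtflowpolytope},'' and your argument is precisely the natural expansion of that remark --- the equivalence of Theorem~\ref{thm:gtflowpolytope} is (up to the fixed boundary entries) one $\lambda$-independent linear map of differences, it commutes with the two index-shifting embeddings, and linearity distributes it over the Minkowski sum, with the explicit integer-coefficient inverse on the affine hull giving the lattice bijection. The remaining items you defer (extending the map to the boundary edges of $G_{\lambda^{(n)}}$ rather than only the interior-edge coordinates, and checking that edges outside each embedded subgraph receive flow $0$) are indeed routine bookkeeping and do not constitute a gap.
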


\begin{definition}
	Given partitions $\lambda^{(k)}$ of size $k$ for $k\in[n]$, let $G({\lambda^{(1)}},\ldots,{\lambda^{(n)}})$ denote the flow network obtained by overlaying the flow networks $G_{\lambda^{(1)}},\ldots,G_{\lambda^{(n)}}$ according to the embedding specified above and adding the corresponding netflows. Let $\widehat{G}({\lambda^{(1)}},\ldots,{\lambda^{(n)}})$ denote the flow network obtained from  $G_{\lambda^{(1)}},\ldots,G_{\lambda^{(n)}}$ by moving all negative netflows to $v_{n+2,n+1}$ and replacing them by zero netflows. The case $n=4$ is demonstrated in Figure \ref{fig:sumexample}.	
\end{definition}

\begin{figure}[ht]
	\begin{center}
		\includegraphics[scale=.8]{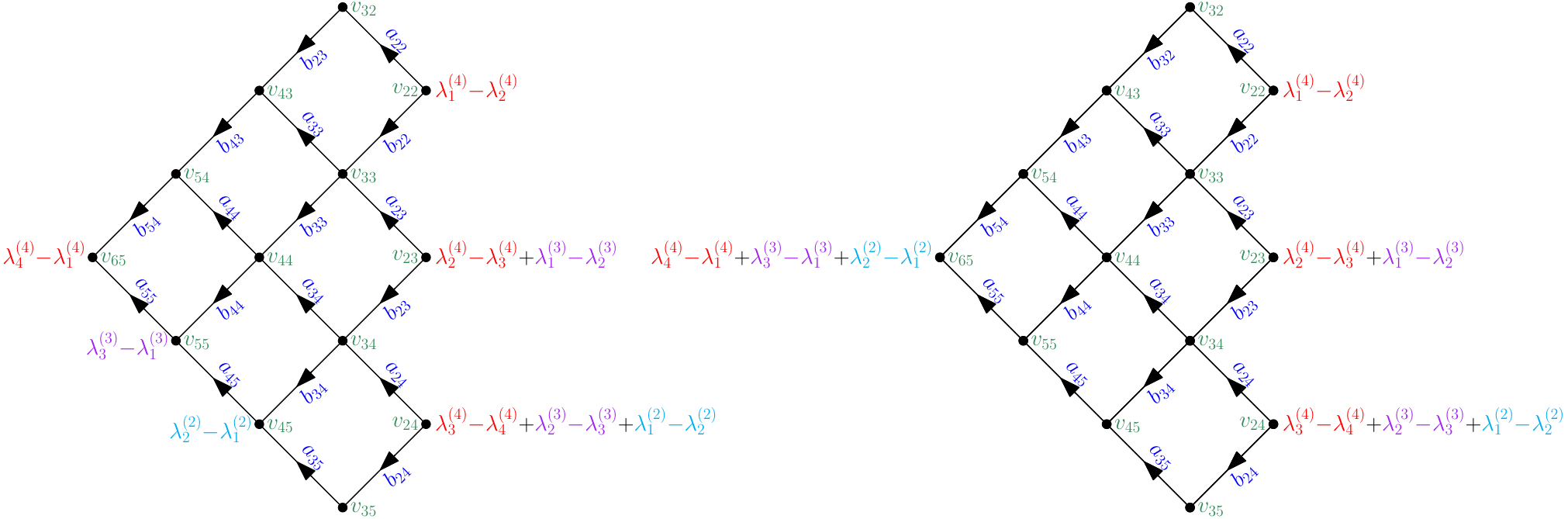}
	\end{center}
	\caption{The flow networks $G({\lambda^{(1)}},{\lambda^{(2)}},{\lambda^{(3)}},{\lambda^{(4)}})$ (left) and $\widehat{G}({\lambda^{(1)}},{\lambda^{(2)}},{\lambda^{(3)}},{\lambda^{(4)}})$ (right)}
	\label{fig:sumexample}
\end{figure}

\begin{lemma}\label{lem:g} The following polytope inclusions hold:
	\[\mathcal{F}_{G_{\lambda^{(n)}}}+\cdots+\mathcal{F}_{G_{\lambda^{(1)}}} \subset \mathcal{F}_{G(\lambda^{(1)},\ldots,\lambda^{(n)})} \subset \mathcal{F}_{\widehat{G}(\lambda^{(1)},\ldots,\lambda^{(n)})},\] 
	the latter being true up to an integral translation of $\mathcal{F}_{G(\lambda^{(1)},\ldots,\lambda^{(n)})}$.
	
	\medskip
	
	\noindent	In general, none of the above inclusions is an equality. The polytope $\mathcal{F}_{\widehat{G}(\lambda^{(1)},\ldots,\lambda^{(n)})}$ is integrally equivalent to the Gelfand-Tsetlin polytope ${\rm GT }(\mu)$ where $\mu_n$ is arbitrary, and for $k<n$,
	\[\mu_k=\mu_{k+1}+\sum_{j=0}^{k-1} \lambda_{k-j}^{(n-j)}-\lambda_{k-j+1}^{(n-j)}. \] 
\end{lemma}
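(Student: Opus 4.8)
The plan is to prove the two displayed inclusions, the generic strictness, and the identification with a Gelfand--Tsetlin polytope one at a time, always arguing with flows directly and invoking Theorem~\ref{thm:gtflowpolytope} only at the end. Under the embedding $v_{ij}\mapsto v_{i,j+n-k}$ every edge of $G_{\lambda^{(k)}}$ becomes an edge of $G_{\lambda^{(n)}}$, so all the flow polytopes in sight live in $\mathbb{R}^{E(G_{\lambda^{(n)}})}=\mathbb{R}^{E(G(\lambda^{(1)},\dots,\lambda^{(n)}))}$. For the first inclusion I would simply add flows edgewise, exactly as in the proof of Lemma~\ref{prop:flowminkowskidecomposition}: if $f_k\in\mathcal{F}_{G_{\lambda^{(k)}}}$ for each $k$, then $f_1+\dots+f_n$ is nonnegative, and because incidence matrices and netflows add when the networks are overlaid, it is a flow on $G(\lambda^{(1)},\dots,\lambda^{(n)})$; this is the first inclusion.

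For the second inclusion, note that $G(\lambda^{(1)},\dots,\lambda^{(n)})$ and $\widehat G(\lambda^{(1)},\dots,\lambda^{(n)})$ have the same underlying graph; the only change is that for each $k<n$ the negative netflow $\lambda^{(k)}_k-\lambda^{(k)}_1=:-d_k$ (the image of the sink netflow of $G_{\lambda^{(k)}}$, which sits at $v_{k+2,n+1}$) has been relocated to $v_{n+2,n+1}$. I would define a fixed correction flow $t$, supported on the column-$(n+1)$ edges, by $t_{(v_{i,n+1},v_{i+1,n+1})}=\sum_{k=1}^{i-2}d_k$ for $3\le i\le n+1$. This $t$ is a nonnegative integer vector, and a direct check shows that applying the incidence matrix to it cancels the netflow $-d_k$ at each $v_{k+2,n+1}$ and accumulates $-\sum_{k<n}d_k$ at $v_{n+2,n+1}$, which is precisely the difference between the two netflow vectors. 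Hence $f\mapsto f+t$ carries $\mathcal{F}_{G(\lambda^{(1)},\dots,\lambda^{(n)})}$ into $\mathcal{F}_{\widehat G(\lambda^{(1)},\dots,\lambda^{(n)})}$, the asserted inclusion up to an integral translation.

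For the identification, observe that the underlying graph of $\widehat G(\lambda^{(1)},\dots,\lambda^{(n)})$ is the graph of $G_\nu$ for any partition $\nu$ with $n$ parts, since that graph depends only on $n$; it remains to match netflows. Tracking the embedding, the source $v_{2,j'}$ of $G_{\lambda^{(n-s)}}$ maps to $v_{2j}$ exactly when $j'=j-s$, contributing $\lambda^{(n-s)}_{j-1-s}-\lambda^{(n-s)}_{j-s}$ to the netflow at $v_{2j}$; summing over $s$ gives netflow $\sum_{s=0}^{j-2}\bigl(\lambda^{(n-s)}_{j-1-s}-\lambda^{(n-s)}_{j-s}\bigr)$ at $v_{2j}$ and netflow $0$ at every internal vertex. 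Choosing $\mu$ with $\mu_n$ arbitrary and $\mu_{j-1}-\mu_j$ equal to this sum --- equivalently, the recurrence in the statement --- makes the netflows of $\widehat G(\lambda^{(1)},\dots,\lambda^{(n)})$ and $G_\mu$ agree at every $v_{2j}$, and by telescoping they then agree at the unique negative-netflow vertex $v_{n+2,n+1}$ as well. Thus $\widehat G(\lambda^{(1)},\dots,\lambda^{(n)})=G_\mu$ as flow networks, and Theorem~\ref{thm:gtflowpolytope} gives that $\mathcal{F}_{\widehat G(\lambda^{(1)},\dots,\lambda^{(n)})}=\mathcal{F}_{G_\mu}$ is integrally equivalent to $\GT(\mu)$; replacing $\mu_n$ by another value shifts $\mu$ by a constant vector, which does not change $\mathcal{F}_{G_\mu}$ and only translates $\GT(\mu)$, so ``$\mu_n$ arbitrary'' is harmless.

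That the inclusions are in general strict I would settle by exhibiting explicit small examples --- already $n=3$ suffices --- for instance a flow on $G(\lambda^{(1)},\dots,\lambda^{(n)})$ that does not decompose into flows on the individual $G_{\lambda^{(k)}}$, and a flow on $\widehat G(\lambda^{(1)},\dots,\lambda^{(n)})$ not of the form $f+t$, or more cheaply by comparing volumes; Figure~\ref{fig:sumexample} already suggests such examples. I expect the main obstacle to be the index bookkeeping in the identification step --- correctly tracking, through the cascaded embeddings $v_{ij}\mapsto v_{i,j+n-k}$, which part of which $\lambda^{(k)}$ contributes to the netflow of each vertex --- together with the verification that the correction flow $t$ really realizes the prescribed change of netflow vector; both are routine but error-prone, whereas the first inclusion and the strictness claim are straightforward.
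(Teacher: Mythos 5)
Your argument is correct and matches the paper's: the paper treats this lemma as following readily from the definitions and the integral equivalence of Theorem~\ref{thm:gtflowpolytope}, and your three verifications---edgewise addition of flows for the first inclusion, the correction flow supported on the column-$(n+1)$ edges realizing the translation for the second, and the netflow bookkeeping identifying $\widehat{G}(\lambda^{(1)},\dots,\lambda^{(n)})$ with $G_\mu$ so that Theorem~\ref{thm:gtflowpolytope} applies---are exactly the intended computations, and they check out. The only piece left unexecuted is an explicit small example witnessing that the inclusions can be strict, which you defer to a plan (and which the paper likewise does not exhibit).
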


Thus, we conclude that for a column-convex diagram $D$  the polytope  $\p_D$ can be thought of as obtained from ${\rm GT }(\mu)$ specified in Lemma \ref{lem:g} via further hyperplane cuts. Recall also Proposition   \ref{prop:gtsum}, which gives another view on $\p_D$.
 
  \section*{Acknowledgments}
  We are grateful to  Allen Knutson for inspiring conversations about Schubert polynomials. 
 
\bibliographystyle{plain}
\bibliography{schubertpolynomialsasprojections}
\end{document}